\documentclass[a4paper]{amsart}

\usepackage{subcaption}
\usepackage{graphicx}
\usepackage{xcolor}
\usepackage{colortbl}    
\def\pdfnero{0} 

\if\pdfnero1
    \pagecolor[rgb]{0,0,0} 
    \color[rgb]{1,1,1}
    \definecolor{citeColor}{rgb}{0.9,0.9,0.9}
    \definecolor{linkColor}{rgb}{0.9,0.9,0.9}
    
    \newcommand{\mionero}{white}
    
    \definecolor{mioara}{rgb}{0,0.5,1}
\else
    \definecolor{citeColor}{rgb}{0.1,0.1,0.1}	
    \definecolor{linkColor}{rgb}{0.1,0.1,0.1}
    
    \newcommand{\mionero}{black}
    
    \definecolor{mioara}{rgb}{1,0.5,0}
\fi

\usepackage[
color=mioara, textcolor=\mionero, bordercolor=\mionero]{todonotes}
    


\definecolor{urlColor}{rgb}{0,0,0.8}				

\definecolor{azzurro}{rgb}{0.9,1,1}    
\definecolor{grigiomio}{rgb}{0.9,0.9,0.9}    
\definecolor{bluu}{rgb}{0.7,1,1}    

\usepackage[utf8]{inputenc}
\usepackage[T1]{fontenc}
\usepackage[english]{babel}

\usepackage{amsfonts}
\usepackage{amsmath}
\usepackage[alphabetic]{amsrefs}
\usepackage{amssymb}
\usepackage{amstext}
\usepackage{amsthm}
\usepackage[all, knot]{xy}

\usepackage{algorithm}
\usepackage{algpseudocode}

\usepackage{verbatim}

\algrenewcommand\algorithmicrequire{\textbf{Input:}}
\algrenewcommand\algorithmicensure{\textbf{Output:}}

\usepackage{geometry}
\usepackage{fullpage}

\usepackage[shortlabels]{enumitem}
\usepackage{graphicx}
\usepackage{hyperref}
\usepackage{listings}
\usepackage{mathrsfs}
\usepackage{mathtools}
\usepackage{multicol}
\usepackage{stmaryrd}

\usepackage{tikz}
\usepackage{tikz-cd}
\usetikzlibrary{arrows,backgrounds,chains,decorations.pathreplacing,patterns,shapes, calc, positioning}




\newcommand{\inv}{\mathsf{inv}}
\newcommand{\Inv}{\mathsf{Inv}}
\newcommand{\Des}{\mathsf{Des}}
\newcommand{\des}{\mathsf{des}}
\newcommand{\lec}{\mathsf{lec}}
\newcommand{\single}{\mathsf{single}}

\newcommand{\sw}{\mathsf{sw}}
\newcommand{\srw}{\mathsf{srw}}
\newcommand{\ins}{\mathsf{ins}}
\newcommand{\rins}{\mathsf{rins}}
\newcommand{\h}{\mathsf{h}}
\newcommand{\wv}{\mathsf{wv}}

\newcommand{\rwave}{\mathsf{rwave}}
\newcommand{\conc}{\mathsf{conc}}
\newcommand{\srdes}{\mathsf{srdes}}




\newcommand{\Hess}{\mathcal Hess}

\newcommand{\cX}{\mathcal X}

\newcommand{\bP}{\mathbb P}
\newcommand{\bS}{\mathbf {S}}


\newtheorem{theorem}{Theorem}[section]

\newtheorem{proposition}[theorem]{Proposition}
\newtheorem{corollary}[theorem]{Corollary}

\theoremstyle{definition}
\newtheorem{definition}[theorem]{Definition}

\newtheorem{example}[theorem]{Example}

\theoremstyle{remark}
\newtheorem{remark}[theorem]{Remark}

\numberwithin{equation}{section}

\usepackage{todonotes}
\setlength{\marginparwidth}{2cm}

\title{Generalizing Eulerian numbers via semipermutations:  topological  and combinatorial aspects}

\author{Giovanni Gaiffi}
\address{Università di Pisa\\Dipartimento di Matematica\\ Largo Bruno Pontecorvo 5, 56127 Pisa\\ Italy}\email{giovanni.gaiffi@unipi.it}

\author{Giovanni Interdonato}
\address{Scuola Normale Superiore\\
	Piazza dei Cavalieri 7,
	56126 Pisa\\ Italy}\email{giovanni.interdonato@sns.it }


\begin{document}
	
\begin{abstract}
In a paper by Lin an interesting family of semipermutations comes out to index the elements of a cohomology basis of a Hessenberg type variety. The corresponding Betti numbers are a generalization of Eulerian numbers. We show three different subsets of the symmetric group that are in bijection with the set of these semipermutations. These bijections  preserve the statistics \(\lec\) and \(\des\): one of these is obtained by  an algebraic-topological argument, the others are explicitly described in combinatorial terms.
\end{abstract}

\maketitle
\tableofcontents

\section{Introduction}
In \cite{Lin24} the author points out the following family of semipermutations, more precisely the set $\mathcal{S}_{n,k+1}$ of permutations of $(k+1)$ different numbers chosen from $[n]:=\{1,\cdots,n\}$, equipped with a descent statistic. 

 \begin{definition}	 

The set $\mathcal{S}_{n,k+1}$ is made by all pairs $(\alpha_0,\rho)$, where $\rho=\rho_1\cdots\rho_{k+1}$ is a word made by $k+1$ different numbers of $[n]$, and $\alpha_0=[n]\setminus\{\rho_1,\dots,\rho_{k+1}\}$.

Given a pair $(\alpha_0,\rho)$ as above, we define its  {\em descent set} as $$\Des((\alpha_0,\rho))=\{(w_i>w_{i+1})\mid i\in[k]\}\sqcup\{(a>\rho_1)\mid a\in\alpha_0 \}$$ and we will denote by $\des((\alpha_0,\rho))=|\Des((\alpha_0,\rho))|$ the cardinality of this set.
\end{definition}  

\begin{example}
Let $(\alpha_0,\tau)=(\{4,6,7\},52318)\in \mathcal{S}_{8,5}$.
We have that $$\Des((\{4,6,7\},52318))=\{(5>2),(3>1)\}\sqcup\{(6>5),(7>5)\}$$.
Therefore $\des((\{4,6,7\},52318))=2+2=4$.
\end{example}

The number of    permutations of $k+1$ different numbers in $[n]$ with $i$ descents appears naturally in \cite{Lin24} as the \(2i\)-th Betti number of a Hessenberg type variety.
The author notices  that these Betti numbers are a  natural generalization of the Eulerian numbers (when \(k=n-2\) we are dealing with  the permutations of \(n-1\) different numbers in $[n]$ with \(i\) descents, which is equivalent to dealing with the permutations of \(n\) numbers with \(i\) descents, whose cardinality is  the Eulerian number $A(n,i+1)$) and observes  that it would   be interesting to study further properties of these numbers (see also this related question on MathOverflow \cite{MOF25} and the OEIS sequence \cite{OEIS_A381706}.

In the present paper we explore this topic, providing a further topological  interpretation of these numbers.  This gives the inspiration to then find   three explicit bijections  of the set of permutations of $k+1$ different numbers in $[n]$ with $i$ descents with some sets of permutations considered with the statistics \(\lec\) or \(\des\). One of the involved bijections turns out to be  an explicit presentation of a bijection introduced by Foata and Han in \cite{FH08}.

The structure of this paper is the following. After recalling  in Section \ref{secpreliminary} the statistics \( \des\) and \(\lec\),  in Section \ref{secalgebraictopology} we consider the Hessenberg type variety whose cohomology in degree \(2i\), according to Lin's construction,  admits a basis indexed by the    permutations of $k+1$ different numbers in $[n]$ with $i$ descents.
We notice that this variety is isomorphic to a De Concini--Procesi wonderful model (these models were introduced in the seminal paper \cite{wonderful1}): a different construction of a cohomology basis coming from the theory of wonderful models then provides us with an algebraic-topological proof of a bijection between the set of the permutations of $k+1$ different numbers in $[n]$ with $i$ descents and the set of the elements of \(\mathcal{S}_n\) whose first hook from the right in the hook factorization has cardinality at least \(n-k\) and whose \(\lec\) is equal to \(i\).

Motivated by this,  we show an explicit combinatorial presentation of the above mentioned bijection. We do this in two steps. First in   Section \ref{sec:reversewaves} we deal with the case \(k=n-2\), i.e. we provide a bijection of \(S_n\) which preserves the statistics \(\lec\) and \(\des\). This is based on the idea of {\em reverse wave}:  let $w=w_1\cdots w_{\ell}$ be a non-empty word made of distinct numbers. Let $c$ be the position in $w$ of the maximum element of $w$, then  $w$ is a \textit{reverse wave} if $c\neq\ell$ and 
    \begin{equation}
    w_1<w_2<\dots<w_{c-1}<w_{\ell}<w_{\ell-1}<\dots<w_c.
    \end{equation}
    or if $c=\ell$ and the elements are in increasing order. 
    
    Then in Section \ref{sec:generalexplicit} we extend this bijection to the general case (i.e. for  any  value of \(k\)).

Finally, in Section \ref{sec:foata} we present a different  bijection $\psi$ of the symmetric group $\mathcal{S}_n$ that sends the $\des$ statistic to the $\lec$ statistic.  This bijection $\psi$ is explicitly presented and appears to be equal, unless we compose with easy bijections, to the map $F$ defined by Foata and Han in \cite{FH08}*{Section~6}). It is based on the idea of {\em wave}:  let $w=w_1\cdots w_{\ell}$ be a non-empty word made of distinct numbers. Let $c$ be the position in $w$ of the maximum element of $w$, we say that $w$ is a \textit{wave} if $c\neq\ell$ and 
    \begin{equation}
        w_{\ell}<w_{\ell-1}<\dots<w_{c+1}<w_1<w_2<\dots<w_c.
    \end{equation}
    or if $c=\ell$ and the elements are in increasing order. 
    
  Also this bijection can be extended to the general case so in the end of the section  we obtain  Theorem \ref{thm:bigezionek} which shows three different sets of \(S_n\) whose cardinality is equal to the cardinality of the set of the permutations of $k+1$ different numbers in $[n]$ with $i$ descents. This extends our first algebraic-topological result, and therefore gives an explicit insight on the semipermutations pointed out by Lin.
  
  {\em Acknowledgments} The authors would like to thank Michele D'Adderio for his discussions on this issue and for his helpful advice.

\section{Some preliminary notation}
\label{secpreliminary}
If $w=w_1\dots w_{\ell}$ is a word, we define its descent set as $\Des(w)=\{(w_i>w_{i+1})\mid i\in[\ell-1]\}$ and we will denote by $\des(w)=|\Des(w)|$ the cardinality of this set. We define its inversion set as $\Inv(w)=\{(i,j)\mid1\leq i<j\leq\ell,\, w_i>w_j\}$ and we will denote by $\inv(w)=|\Inv(w)|$ the cardinality of this set. These definitions naturally extend to permutations, considering the word obtained by writing them in one-line notation.

\begin{example}\label{ex_sigma}
Let $\sigma=45762318\in \mathcal{S}_8$. We will have that $\Des(\sigma)=\{(7>6),(6>2),(3>1)\}$ and hence that $\des(\sigma)=3$.
\end{example}

Given a word $w=w_1\cdots w_{\ell}$ we recall the definition of Hook factorization, first introduced by Gessel in \cite{G91}. There exists and is unique a decomposition of $w=\gamma*\alpha_1*\cdots*\alpha_k$, where the operator $*$ is the concatenation of words, such that the elements in $\gamma$ are in increasing order,  and each $\alpha_i$ is a \textit{hook}, that is $\alpha_i=a^{(i)}_1\cdots a^{(i)}_{s_i}$, with $s_i\geq2$, $a^{(i)}_1>a^{(i)}_2$ and $a^{(i)}_2<\dots<a^{(i)}_{s_i}$, for each $i=1,\dots,k$. We then define the statistic $\displaystyle\lec(w)=\sum_{i=1}^k\inv(\alpha_i)$ as the sum of the number of inversions of each hook.
We will also denote a word with elements in increasing order, such as $\gamma$, as a \textit{trivial hook}. 

\begin{remark}\label{ind_lec}
We can obtain this factorization recursively, taking as $\alpha_k$ the suffix of $w$ that starts at the position of the rightmost descent of $w$, and then iterating this reasoning on the remaining prefix obtained by removing $\alpha_k$. The process ends when no further descents are present, i.e. when we have a word whose elements are in increasing order: this will be $\gamma$.
\end{remark}

\begin{example}\label{ex_tau}
Let $\tau=18326457\in \mathcal{S}_8$. Its Hook factorization is:
\begin{equation}
\tau=\underline{18}*32*6457,
\end{equation}
where $\gamma=18$ is the underlined factor. We therefore have that 
$\lec(\tau)=\inv(32)+\inv(6457)=1+2=3$.
\end{example}

\section{A bijection pointed out by  algebraic topology}
\label{secalgebraictopology}
Let us denote by $\cX$ the permutohedral variety  of dimension $n-1$, i.e. the toric variety associated to the root system of type \(A_{n-1}\). 
As it is well known (see for instance \cite{Pro90}), it can also be obtained as an iterated blowup of $\bP^{n-1}$:
\[
\xymatrix{
	\cX=\cX_{n-2}\ar[r]^{\pi_{n-2}} &\cX_{n-3} \ar[r]^{\pi_{n-3}}& \cdots \ar[r]^{\pi_{2}}&
	\cX_{1} \ar[r]^{\pi_{1}\ \ \ \ \ } & \cX_0 = \bP^{n-1}
}
\]
where  each $\cX_{k+1}$ is the blowup of $\cX_k$ along the strict transform of the $k$-dimensional coordinate spaces ($0\le k\le n-3$).

In \cite{Lin24}  the author describes an explicit isomorphism between  the permutohedral variety  of dimension $n-1$, and the regular semisimple Hessenberg variety $\Hess(\bS,h_+)$ associated to the Hessenberg function $h_+$ defined by $h_+(i)=i+1$ for $1\le i\le n-1$ and $h_+(n)=n$.
In this process, he further obtains isomorphisms from $\cX_k$, the $k$-th step in the iterated blowups, to a ``Hessenberg type'' subvariety of a partial flag variety which is denoted by $\Hess^{(k+1)}(\bS,h_+)$.
Then he exhibits a basis for the integer cohomology $H^*(\cX_k)$ whose elements are in one-to-one correspondence with the permutations of $(k+1)$ different numbers chosen from $[n]:=\{1,\cdots,n\}$. This allows for a combinatorial computation of the Betti numbers of $\cX_k$, more precisely  the even Betti numbers of $\cX_k$ are obtained as follows (the odd Betti numbers of $\cX_{k}$ are all equal to $0$).

\begin{theorem}\label{thm:Lin} [see \cite{Lin24}]
The $2i$-th Betti number of $\cX_{k}$ is given by
	\begin{align*}
		\beta_{2i}(\cX_{k})&= \# (\text{permutations of $k+1$ different numbers in $[n]$ with $i$ descents})
	\end{align*}
\end{theorem}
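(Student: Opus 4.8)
\emph{Proof strategy.} The plan is to compute the Poincaré polynomial $P_{\cX_k}(q):=\sum_i\beta_{2i}(\cX_k)\,q^i$ by propagating it through the iterated blowup $\cX_k\to\cX_{k-1}\to\cdots\to\cX_0=\bP^{n-1}$ via the classical blowup formula, and then to verify that the resulting recursion is also satisfied, with the same initial condition, by the descent generating function $E_{n,k}(q):=\sum_{(\alpha_0,\rho)\in\mathcal S_{n,k+1}}q^{\des((\alpha_0,\rho))}$. Since both sides are determined by the recursion plus the base case, this identifies the two polynomials and hence proves the theorem.

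For the geometric side, recall that the blowup $\widetilde X\to X$ of a smooth projective variety along a smooth closed center $Z$ of codimension $c$ satisfies $H^{*}(\widetilde X)\cong H^{*}(X)\oplus\bigoplus_{j=1}^{c-1}H^{*-2j}(Z)$, so $P_{\widetilde X}(q)=P_X(q)+(q+q^2+\cdots+q^{c-1})\,P_Z(q)$ (and all odd cohomology stays zero, as it does for $\bP^{n-1}$ and as every center occurring below will inductively have none). Now $\cX_k=\mathrm{Bl}_{Z_k}\cX_{k-1}$, where $Z_k$ is the disjoint union, over the $\binom{n}{k}$ coordinate subspaces $L_S\cong\bP^{k-1}$ of $\bP^{n-1}$ spanned by the $k$-subsets $S\subseteq[n]$, of the strict transforms of the $L_S$ in $\cX_{k-1}$; here one uses the standard fact from the De Concini--Procesi wonderful model construction \cite{wonderful1} that, once all coordinate subspaces of dimension $<k-1$ have been blown up, the strict transforms of the $(k-1)$-dimensional ones are smooth and pairwise disjoint. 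Restricting the earlier blowups to $L_S$ shows that its strict transform is $\bP^{k-1}$ blown up along all of its coordinate subspaces, i.e.\ the permutohedral variety of dimension $k-1$, whose Poincaré polynomial is classically the Eulerian polynomial $A_k(q)=\sum_{\sigma\in\mathcal S_k}q^{\des(\sigma)}$ (the $h$-polynomial of the permutohedron). Since $\mathrm{codim}_{\cX_{k-1}}Z_k=(n-1)-(k-1)=n-k$, we obtain
\begin{equation*}
P_{\cX_k}(q)=P_{\cX_{k-1}}(q)+\binom{n}{k}\,(q+q^2+\cdots+q^{n-k-1})\,A_k(q),\qquad P_{\cX_0}(q)=1+q+\cdots+q^{n-1}.
\end{equation*}

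For the combinatorial side, the base case $k=0$ is immediate: for $\rho=\rho_1$ one has $\des=\#\{a>\rho_1:a\in[n]\setminus\{\rho_1\}\}=n-\rho_1$, so $E_{n,0}(q)=1+q+\cdots+q^{n-1}$. For the inductive step one must prove the identity
\begin{equation*}
E_{n,k}(q)-E_{n,k-1}(q)=\binom{n}{k}\,(q+q^2+\cdots+q^{n-k-1})\,A_k(q).\tag{$\star$}
\end{equation*}
I would prove $(\star)$ by grouping the elements of $\mathcal S_{n,k+1}$ by their underlying $(k+1)$-set and tracking the effect on $\des$ of deleting the first letter $\rho_1$ from $\rho$ and absorbing it into $\alpha_0$: this isolates a copy of $\mathcal S_{n,k}$ together with a remainder. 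The delicate point is that deleting $\rho_1$ both removes the possible within-word descent $(\rho_1>\rho_2)$ and replaces the block of enhanced descents $\{(a>\rho_1):a\in\alpha_0\}$ by $\{(a>\rho_2):a\in\alpha_0\cup\{\rho_1\}\}$, so the obvious length-decreasing map is \emph{not} descent-preserving; following this change should express the remainder in terms of a choice of a $k$-subset $S$, a permutation of $S$ (accounting for $A_k(q)$, once the letters of the deleted word are sorted into a hook-type normal form), and a gap parameter in $\{1,\dots,n-k-1\}$ recording how far $\rho_1$ sits below $\max(\alpha_0)$ (accounting for $q+\cdots+q^{n-k-1}$). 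Alternatively one can bypass bijections and check $(\star)$ directly by summing the $q^{n-\rho_1}$-type contributions of the first letter against an Eulerian enumeration of the remaining word.

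The main obstacle is precisely this last piece of bookkeeping: because of the enhanced-descent convention, the variation of $\des$ under a single letter deletion is non-local, so the bijection (or generating-function identity) behind $(\star)$ is not the naive one and must be engineered with care. A second, more routine, point is the disjointness and the identification of the centers $Z_k$, which is where the wonderful-model machinery of \cite{wonderful1} enters; one could instead sidestep the blowup analysis altogether by using Lin's identification of $\cX_k$ with the Hessenberg-type variety $\Hess^{(k+1)}(\bS,h_+)$, which is toric, and reading $\beta_{2i}(\cX_k)$ off an affine paving by torus orbits — but matching the resulting face counts with $\mathcal S_{n,k+1}$ reproduces essentially the same identity $(\star)$.
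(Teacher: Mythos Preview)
The paper does not give its own proof of this statement: it is quoted from \cite{Lin24} and used as input for the rest of the paper. So there is no ``paper's proof'' to compare against.

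Regarding your proposal on its own merits: the geometric half is sound. The blowup formula, the disjointness of the strict transforms of the $(k-1)$-dimensional coordinate subspaces inside $\cX_{k-1}$, and their identification with the permutohedral variety of dimension $k-1$ are all standard consequences of the wonderful-model construction, and they do yield the recursion
\[
P_{\cX_k}(q)=P_{\cX_{k-1}}(q)+\binom{n}{k}\,(q+\cdots+q^{n-k-1})\,A_k(q),\qquad P_{\cX_0}(q)=[n]_q.
\]
The base case $E_{n,0}(q)=[n]_q$ is also correct.

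The genuine gap is exactly where you locate it yourself: you do not prove the identity $(\star)$. Your sketch (``delete $\rho_1$, absorb it into $\alpha_0$, track the change in $\des$'') is only a heuristic; as you note, the enhanced-descent convention makes the effect of deleting $\rho_1$ non-local, and neither of the two mechanisms you propose (a bijection isolating a $k$-subset, a permutation, and a gap parameter; or a direct generating-function computation) is carried out. Since both $P_{\cX_k}$ and $E_{n,k}$ are \emph{determined} by the recursion plus the base case, $(\star)$ is not a side lemma but the entire combinatorial content of the theorem, and without it the argument is incomplete. Your alternative suggestion of reading the Betti numbers off an affine paving of $\Hess^{(k+1)}(\bS,h_+)$ and matching cells to $\mathcal S_{n,k+1}$ is in fact closer to what Lin actually does, but again you have only named it, not done it.
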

These Betti numbers are a quite natural generalization of the Eulerian numbers. In fact  we notice that when \(k=n-2\) the formula  above states that $\beta_{2i}(\cX_{n-2})$ is the number of permutations of \(n-1\) different numbers in $[n]$ with \(i\) descents, which is equivalent to saying  that it is the number of permutations of \(n\) numbers with \(i\) descents, i.e. the Eulerian number $A(n,i+1)$. On the geometrical side, we observe that $\cX_{n-2}$ is the permutohedral variety and it is well known that its Betti numbers $\beta_{2i}(\cX_{n-2})$ coincide with  the Eulerian numbers. 

Let now \(k\) be an integer such that \(0\leq k\leq n-1\) and let us now consider in $\mathbb C^n$ the  arrangement \(\mathcal G_k\) given all the coordinate subspaces of dimension \(\leq k\). This arrangement is built according to the definition given  by De Concini and Procesi in \cite{wonderful1};  the space that we obtain by blowing up all the subspaces in \(\mathcal G_k\) is a wonderful model $Y_{k}$ whose cohomology ring is isomorphic to  the cohomology of the corresponding projective model   i.e. $\cX_{k}$.  In  ~\cite{gaiffiselecta} a description of  a monomial basis  for the integer cohomology of De Concini--Procesi wonderful models is provided, and this basis for $Y_k$ can be described in the following way.

A monomial in this basis is a product of Chern classes of  (transforms of)  subspaces in \(\mathcal G_k\). Let \(\zeta_{G_1}^{\alpha_1}\zeta_{G_2}^{\alpha_2} \cdots \zeta_{G_s}^{\alpha_s} \) be such a monomial. Then
\begin{enumerate}
\item \(G_1,G_2,\ldots, G_s\) is a chain in \(\mathcal G_k\) with respect to inclusion (say \(G_1\subset \cdots \subset G_s\)) where in each inclusion the codimension is greater than 1 and the codimension of \(G_s\) in $\mathbb C^n$ is greater than 1;
\item for every \(i\), \(\alpha_i\) is a positive integer less than the codimension of \(G_i\) in \(G_{i+1}\) (for \(i=s\) we consider the codimension of \(G_s\) in $\mathbb C^n$). 
\end{enumerate} 

We now use  for brevity a notation as in the following example: if \(G_1\) is the subspace given by the equations \(x_1=x_2=x_4=x_6=0\) then we associate to \(G_1\) the subset \({1,2,4,6}\) of \([n]\), we denote  \(G_1=H_{1,2,4,6}\)  and we call \(\zeta_{1,2,4,6}\) its associated Chern class. According to this notation,  there is a bijection between the elements in \(\mathcal G_k\) and the subsets of \([n]\) whose  cardinality is greater than or equal to  \(n-k\).
\begin{remark}
As an example let \(k=7\) $n=11$ and consider the monomial
\begin{equation}
\label{esempiobase}
\zeta_{\{1,2,4,5\}}^2\zeta_{\{1,2,4,5,6,7,8\}}^2\zeta_{\{1,2,4,5,6,7,8,9,11\}}
\end{equation}  
which is an element of the basis of the cohomology of \(\cX_{7} \).

For instance, notice that here  the exponent of $\zeta_{\{1,2,4,5\}}$ is strictly less than $4$, which is  the codimension of $H_{1,2,4,5}$ in $\mathbb C^{11}$. 
\end{remark}
We now show an algorithm that produces a bijection between this monomial  basis of $\cX_{k}$ and the elements of \(\mathcal{S}_n\) whose first hook from the right in the hook factorization has cardinality at least \(n-k\) (see also \cite{GPS24}*{Section~5.2} for the case $k=n-1$). This bijection is grade-preserving provided that we consider in \(\mathcal{S}_n\) the grade induced by the statistic $\lec$.
We illustrate the algorithm producing a permutation \(\sigma\) from the above mentioned monomial 
\[
\zeta_{\{1,2,4,5\}}^2\zeta_{\{1,2,4,5,6,7,8\}}^2\zeta_{\{1,2,4,5,6,7,8,9,11\}}
\] 
\begin{itemize}
\item We first look at the elements in $\{1,\ldots,11\}$ that do not appear in the description of the monomial. We write them in increasing order obtaining the non-hook part $\gamma$ of $\sigma$. In our example we have $\{1,\dotsc,11\}\setminus \{1,2,4,5,6,7,8,9,11\}=\{3,10\}$ so $\gamma=3\  10$.
\item The first (from the right) hook  of $\sigma$ is obtained by taking  the numbers 1,2,4,5, that belong to the smallest set of the chain, and by arranging them in such a way the number of inversions is equal to the exponent 2 of \(\zeta_{\{1,2,4,5\}}^2\). So we get 4125.
\item The second (from the right) hook  of $\sigma$ is formed using the numbers of the second set of the chain, i.e. \(\{1,2,4,5,6,7,8\}\), that do not appear in the smallest one. In the example those numbers are $\{6,7,8\}$ which is $\{1,2,4,5,6,7,8\}\setminus \{1,2,4,5\}$, arranged in such a way to obtain 2 inversions since $2$ is the exponent of $\zeta_{\{1,2,4,5,6,7,8\}}$ in the monomial: 867.
\item Analogously, the last (from the right) hook  is 11 \( \) 9
\end{itemize}
In the end we obtain $\sigma = 3 \  10 \  11 \  9 8674125$, which satisfies  $\lec(\sigma)=5$.

From this bijection we deduce another way to obtain the Betti numbers of $\cX_{k}$:
\begin{theorem}
Let $i>0$. The $2i$-th Betti number of $\cX_{k}$ is given by
\begin{align*}
\beta_{2i}(\cX_{k})= \# (&\text{elements \(\sigma\in \mathcal{S}_n\), such that  \(\lec(\sigma)=i\), whose  first}\\
&\text{hook from the right has cardinality at least \(n-k\)})
\end{align*}
\end{theorem}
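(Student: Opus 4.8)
The plan is to read the statement off from the algorithm already sketched, by checking that the algorithm is a bijection and that it carries the cohomological degree to twice the statistic \(\lec\). First I would reduce to counting a monomial basis. By De Concini--Procesi's theory \cite{wonderful1}, the wonderful model \(Y_k\) obtained by blowing up all the subspaces of \(\mathcal G_k\) satisfies \(H^*(Y_k)\cong H^*(\cX_k)\) as graded rings, so \(\beta_{2i}(\cX_k)=\dim H^{2i}(Y_k)\). By \cite{gaiffiselecta} a basis of \(H^*(Y_k)\) is given by the admissible monomials \(\zeta_{G_1}^{\alpha_1}\cdots\zeta_{G_s}^{\alpha_s}\) subject to conditions (1)--(2) above, and since each Chern class \(\zeta_{G_j}\) has cohomological degree \(2\), such a monomial lies in \(H^{2i}(Y_k)\) exactly when \(\alpha_1+\cdots+\alpha_s=i\). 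Thus it suffices to biject the admissible monomials with \(\sum_j\alpha_j=i\) with the permutations \(\sigma\in\mathcal S_n\) such that \(\lec(\sigma)=i\) whose first hook from the right has cardinality at least \(n-k\).

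Next I would verify that the algorithm is a well-defined bijection by analysing it layer by layer. Encode a chain as an increasing chain of index sets \(T_1\subsetneq T_2\subsetneq\cdots\subsetneq T_s\subseteq[n]\), where \(T_j\) is the set of coordinates cut out by \(G_j\), so that membership in \(\mathcal G_k\) means \(|T_j|\ge n-k\). The algorithm outputs the word \(\gamma*\eta_s*\cdots*\eta_1\), where \(\gamma\) lists \([n]\setminus T_s\) in increasing order, \(\eta_1\) is a word on the letters of \(T_1\), and \(\eta_j\) is a word on the letters of \(T_j\setminus T_{j-1}\) for \(2\le j\le s\). The admissibility constraints translate transparently: condition (1) says \(|T_1|\ge 2\) and \(|T_j|-|T_{j-1}|\ge 2\), which is exactly the requirement that each \(\eta_j\) have length \(\ge 2\) and so be a non-trivial hook, while \(|T_1|\ge n-k\) is precisely the extra hypothesis in the statement, automatic because \(T_1\in\mathcal G_k\). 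The only genuinely combinatorial point I would isolate as a short lemma is: for a set \(V\) of \(m\ge 2\) distinct integers and an integer \(t\) with \(1\le t\le m-1\), there is exactly one hook on the alphabet \(V\) with \(\inv\) equal to \(t\). Indeed, in a hook \(a_1a_2\cdots a_m\) with \(a_1>a_2<a_3<\cdots<a_m\) one necessarily has \(a_2=\min V\) and \(a_3,\dots,a_m\) sorted, so the hook is determined by \(a_1\), and if \(a_1\) is the \((t+1)\)-st smallest element of \(V\) then the number of inversions is \(t\). By this lemma the assignment \(\alpha_j\mapsto\eta_j\) (for \(1\le\alpha_j\le |T_j|-|T_{j-1}|-1\)) is a bijection at each layer, hence the whole algorithm is injective and surjective onto the stated set; its inverse reads the chain back off as the nested supports of the successive hooks of \(\sigma\) from the right, all of which have cardinality \(\ge n-k\) and therefore lie in \(\mathcal G_k\).

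Grade preservation is then immediate: by construction \(\lec(\sigma)=\sum_{j=1}^s\inv(\eta_j)=\sum_{j=1}^s\alpha_j\), so admissible monomials in \(H^{2i}(Y_k)\) correspond bijectively to permutations \(\sigma\) as above with \(\lec(\sigma)=i\), and hence \(\beta_{2i}(\cX_k)\) equals the number of such permutations. The hypothesis \(i>0\) is what discards the degenerate case of the empty monomial, whose image under the algorithm is the identity permutation, which has no hook at all. As a consistency check this is compatible with Theorem \ref{thm:Lin}, so in particular one obtains a bijection between the permutations of \(k+1\) different numbers of \([n]\) with \(i\) descents and the permutations just described, anticipating the explicit combinatorial bijections of the later sections.

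I expect the real obstacle to be the middle step: not the ideas, which are routine, but the bookkeeping — keeping straight the two opposite orderings in play (inclusion of the subspaces \(G_j\) versus inclusion of their index sets \(T_j\)), matching the boundary case in condition (1) against the minimal hook length, and stating and proving the ``unique hook with a prescribed number of inversions'' lemma cleanly, since it is exactly what makes each layer of the correspondence a bijection.
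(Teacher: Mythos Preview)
Your proposal is correct and follows essentially the same approach as the paper: the paper likewise obtains the result by invoking the wonderful-model cohomology basis of \cite{gaiffiselecta} and describing (via a worked example rather than a formal argument) the very bijection you analyse layer by layer, including the observation that a hook on a given alphabet is uniquely determined by its number of inversions (this is the paper's Remark~\ref{lec_inverse}). Your write-up simply supplies the bookkeeping that the paper leaves implicit, so there is nothing to add.
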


Therefore considering two different cohomology bases of  $\cX_{k}$ allows us to deduce, by  double counting, the following result:
\begin{corollary}
\label{corollario:bigezionek}
The cardinality of the set of the permutations of $k+1$ different numbers in $[n]$ with $i$ descents is equal to the cardinality of  the set of the  elements \(\sigma\in \mathcal{S}_n\), such that  \(\lec(\sigma)=i\),  whose  first hook from the right 
 has cardinality at least \(n-k\).
\end{corollary}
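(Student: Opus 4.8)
The plan is to obtain the Corollary by double counting, reading it off from the two combinatorial formulas for the even Betti numbers of $\cX_k$ established above; once both formulas are in hand no further geometry is needed.

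First I would invoke Theorem~\ref{thm:Lin} --- which relies on Lin's isomorphism of $\cX_k$ with the Hessenberg type subvariety $\Hess^{(k+1)}(\bS,h_+)$ together with his cohomology basis --- to identify $\beta_{2i}(\cX_k)$ with the number of permutations of $k+1$ distinct elements of $[n]$ having $i$ descents. Next, using that $\cX_k$ is the projective De Concini--Procesi model of the arrangement $\mathcal G_k$ of coordinate subspaces of $\mathbb C^n$ of dimension $\leq k$, so that $H^*(\cX_k)\cong H^*(Y_k)$, I would take the monomial basis of \cite{gaiffiselecta} for $H^*(Y_k)$ --- admissible chains in $\mathcal G_k$ decorated with admissible exponents --- and transport it through the algorithm displayed above to the set of $\sigma\in\mathcal S_n$ whose first hook from the right has cardinality at least $n-k$, noting that the cohomological degree $2\sum_j\alpha_j$ of a monomial equals $2\lec(\sigma)$. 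This identifies $\beta_{2i}(\cX_k)$ with the number of such $\sigma$ with $\lec(\sigma)=i$, which is exactly the theorem preceding the Corollary. Equating the two expressions for $\beta_{2i}(\cX_k)$ then yields the claimed equality of cardinalities; for $i=0$ both sides equal $1$, so one may assume $i>0$.

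The one point that really needs verification --- and the step I would treat most carefully --- is that the displayed algorithm is a genuine bijection. Going backwards, one lets $\gamma$ be the increasing rearrangement of the complement in $[n]$ of the ground set of the largest subspace in the chain, and each jump $G_j\subset G_{j+1}$ produces a hook on the newly introduced labels with precisely $\alpha_j$ inversions; the thing to check is that the admissibility of the chain (each step, and the last subspace, of codimension greater than $1$) and of the exponents (each $\alpha_j$ a positive integer below the relevant codimension) corresponds exactly to the statement ``each $\alpha_j$ is a hook of length $s_j\geq 2$ with $\inv(\alpha_j)\in\{1,\dots,s_j-1\}$'', and that $G_s\in\mathcal G_k$ corresponds to ``the first hook from the right has cardinality at least $n-k$''. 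The inverse map is the evident one --- collect the nontrivial hooks of $\sigma$, build the nested label sets, record the inversion numbers as exponents --- and confirming that the two maps are mutually inverse and simultaneously degree- and $\lec$-preserving is routine bookkeeping with no real obstacle.

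Finally, I would remark that the Corollary can also be proved with no topology at all, by producing an explicit bijection between the two sets that carries $\des$ on the first to $\lec$ on the second; this is the programme of the following sections, first for $k=n-2$ and then in general. The genuine difficulty, I expect, lies there rather than in the present double count: the delicate part will be to control how $\des$ and $\lec$ interact with the hook factorization under the explicit map.
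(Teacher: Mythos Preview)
Your proposal is correct and follows exactly the paper's approach: the Corollary is deduced by double counting, equating the two descriptions of $\beta_{2i}(\cX_k)$ coming from Lin's basis (Theorem~\ref{thm:Lin}) and from the monomial basis of the wonderful model together with the displayed algorithm. Your extra care in checking that the algorithm really is a bijection and is degree/$\lec$-preserving is appropriate, since the paper only illustrates this via an example; your forward reference to the purely combinatorial proof in the later sections is also in keeping with the paper's narrative.
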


\section{An explicit  bijection of \texorpdfstring{$\mathcal{S}_n$}{Sn} preserving des-lec statistic: reverse waves}
\label{sec:reversewaves}
Motivated by Corollary~\ref{corollario:bigezionek} we now search for a combinatorial bijection between the set of the permutations of $k+1$ different numbers in $[n]$ with $i$ descents and the set of the  elements \(\sigma\in \mathcal{S}_n\), such that  \(\lec(\sigma)=i\),  whose  first hook from the right has cardinality at least \(n-k\).
In this section we  focus on the case \(k=n-2\), i.e. we search for a combinatorial bijection of the symmetric group $\mathcal{S}_n$ that sends the $\des$ statistic to the $\lec$ statistic. 

To do so, we will present an explicit bijection of the set $$W=\{a_1\cdots a_{\ell}\mid \ell\in\mathbb{N},\,a_i\in\mathbb{N}^*\,\forall i\text{ and }a_i\neq a_j\,\forall\, i\neq j\}$$ of all finite words with distinct entries, such that if we restrict to the symmetric group $\mathcal{S}_n$, it becomes a bijection of $\mathcal{S}_n$. 

\begin{remark} \label{lec_inverse}
We can evaluate the $\lec$ of a word $w$ in a recursive way, as suggested by the Remark~\ref{ind_lec}. If $w=\gamma*\alpha_1*\cdots*\alpha_k$ is the Hook factorization, and $k=0$, then $\lec(w)=\lec(\gamma)=0$, while if $k>0$, then we have by definition that
\begin{equation}
    \lec(w)=\lec(\gamma*\alpha_1*\cdots*\alpha_{k-1})+\lec(\alpha_k).
\end{equation}

If a hook $\alpha$ has length $\ell$, then we obtain that $1\leq\lec(\alpha)\leq\ell-1$. Moreover, if we have $\ell$ distinct numbers $a_1<a_2<\cdots<a_{\ell}$, for any $0\leq d\leq\ell-1$, we have exactly one way to rearrange them to create a hook, possibly trivial if $d=0$,  $\alpha_d$ with $\lec(\alpha_d)$ equal to $d$, that is, $\alpha_d=a_{d+1}a_1a_2\cdots a_da_{d+2}\cdots a_{\ell}$.
\end{remark}

We will define a similar way of evaluating descents by decomposition in smaller words.

\begin{definition}
    Let $w=w_1\cdots w_{\ell}\in W$ be a non-empty word of length $\ell$. Let $c$ be the position in $w$ of the maximum element of $w$, we say that $w$ is a \textit{reverse wave} if $c\neq\ell$ and 
    \begin{equation}
    w_1<w_2<\dots<w_{c-1}<w_{\ell}<w_{\ell-1}<\dots<w_c.
    \end{equation}
    We will also denote a word with elements in increasing order, that is the case with $c=\ell$, as a \textit{trivial reverse wave}.
\end{definition}

\begin{remark}\label{rem:pref_rw}
    If $w$ is a reverse wave, then by definition, any prefix of $w$ is also a reverse wave, possibly trivial.
\end{remark}

\begin{remark}\label{rem:unique_rwave}
Similarly to what was observed in Remark~\ref{lec_inverse}, if a reverse wave $rw$ has length $\ell $, then we have that $1\leq\des(rw)\leq\ell-1$. Moreover if we have $\ell$ distinct numbers $b_1<b_2<\cdots<b_{\ell}$, for any $0\leq d\leq\ell-1$, we have exactly one way to rearrange them to create a reverse wave, possibly trivial if $d=0$, $r_d$ with $\des(rw_d)$ equal to $d$, that is, ${rw_d=b_1b_2\cdots b_{\ell-d-1}b_{\ell}b_{\ell-1}\cdots b_{\ell-d}}$.
\end{remark}

\begin{definition}
    \label{def:special_reverse_wave}
    Let $w=w_1\cdots w_{\ell}\in W$ be a non-empty word of length $\ell $. The \textit{special reverse wave} of $w$ is the prefix $\srw(w)=w_1w_{2}\cdots w_t\cdots w_{s-1}w_s$ of $w$ where:
    \begin{itemize}
        \item $1\leq t<\ell$ is the index of the first descent of $w$, so it is the minimum such that $w_t>w_{t+1}$. If $w$ has no descents, if and only if elements of $w$ are in increasing order, then $\srw(w)=w$.
        \item $t\leq s\leq\ell$ is the maximum such that $w_{t}>w_{t+1}>\dots>w_{s-1}>w_s>w_{t-1}$, where, if $t=1$, we consider $w_0=0$. It exists since $w_{t}>w_{t-1}$, by minimality of $t$, and if $t=1$, then is it true that $w_{t}>0=w_{t-1}$ by definition of $W$.
    \end{itemize}  
\end{definition}

\begin{remark}
    If $\srw(w)=w_1\cdots w_t\cdots w_{s-1}w_s$ is the special reverse wave of $w$, then we have 
    \begin{equation}\label{eq:srw_order}
    w_1<w_2<\dots<w_{t-1}<w_s<w_{s-1}<\dots<w_{t}
    \end{equation}
    and in particular that $\srw(w)$ is a reverse wave, possibly trivial. This is the maximal prefix of $\sigma$ that is also a reverse wave.  If $w$ has no descents, then $\srw(w)=w$, and in this case $\srw(w)$ is a trivial reverse wave.
    
    We also have, if $s\neq\ell$, that $w_{s+1}>w_s$ or that $w_{s+1}<w_{t-1}<w_{s}$.
\end{remark}

We will denote by $w\setminus\srw(w)=w_{s+1}\cdots w_{\ell}$ the suffix obtained by removing from $w$ its special reverse wave. If $\srw(w)=w$ then $w\setminus\srw(w)$ is the empty word.

\begin{example}\label{ex_srw}
Let $\sigma=45762318\in \mathcal{S}_8\subset W$, then its special reverse wave is $\srw(\sigma)=4576$ and $w\setminus\srw(w)=2318$.
\end{example}

We introduce a new statistic that will allow us to correlate the number of the descents of a word with those of its special reverse wave.

\begin{definition}
    \label{def:special_reverse_des}
    Let $w=w_1\cdots w_{\ell}\in W$ be a non-empty word and $\srw(w)=w_1w_{2}\cdots w_t\cdots w_{s-1}w_s$ be its special reverse wave. We denote \textit{the number of special reverse descents} of $w$ by
    \begin{equation}
        \srdes(w)=\des(\srw(w))+\chi(w_s>w_{s+1})
    \end{equation}
    where $\chi(P)$ equals $1$ if the property $P$ is true, and $0$ otherwise, and if $s=\ell$ then $\chi(w_s>w_{s+1})=0$.
\end{definition}

\begin{remark}\label{rem:des_rspezza}
We have that $\des(w)=\srdes(w)+\des(w\setminus\srw(w))$. Indeed if $s=\ell$  we obtain that $w=\srw(w)$ and $\srdes(w)=\des(\srw(w))=\des(w)$ as wanted, while if $s<\ell$, we get that
\begin{align}
    \des(w)=\des(w_1\cdots w_{\ell})&= \des(w_1\cdots w_{s})+\chi(w_{s}>w_{s+1})+\des(w_{s+1}\cdots w_l)\\
    &=\srdes(w)+\des(w\setminus\sw(w))\notag.
\end{align}
\end{remark}

\begin{example}\label{ex:srw_spezza}
Let $\sigma=45762318\in \mathcal{S}_8\subset W$, as in the previous example. We have that $\srdes(w)=\des(4576)+\chi(6>2)=1+1=2$ and $\des(w)=3=2+1=\srdes(w)+\des(2318)$.
\end{example}

\begin{proposition}\label{prop:rdes_not_0}

Let $w=w_1\cdots w_{\ell}\in W$ be a non increasing word and $\sw(w)=w_1\cdots w_s$ its special reverse wave. Then we have that
\begin{equation}\label{eq:des_spezza2}
1\leq\srdes(w)\leq s-1.
\end{equation}   
Moreover if $w$ is an increasing word then $\srdes(w)=0$.
\end{proposition}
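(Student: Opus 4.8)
The plan is to pin down $\srdes(w)$ by an explicit formula involving two indices: the position $t$ of the first descent of $w$ and the index $s$ ending the special reverse wave $\srw(w)=w_1\cdots w_s$. The ``moreover'' clause is immediate: if $w$ is increasing then $\srw(w)=w$ by Definition~\ref{def:special_reverse_wave}, so $s=\ell$, $\des(\srw(w))=0$, and $\chi(w_s>w_{s+1})=0$ by the convention fixed in Definition~\ref{def:special_reverse_des}; hence $\srdes(w)=0$.

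Now assume $w$ is not increasing, so that $t$ is defined and $1\le t<\ell$. Reading off the chain \eqref{eq:srw_order}, the maximum of $\srw(w)$ is $w_t$ and occupies position $t$, so $\srw(w)$ is a reverse wave whose descents are exactly the positions $t,t+1,\dots,s-1$; thus $\des(\srw(w))=s-t$ and
\[
\srdes(w)=(s-t)+\chi(w_s>w_{s+1}).
\]
From this both bounds are essentially bookkeeping. For $\srdes(w)\ge 1$: if $s>t$ then $s-t\ge 1$ already; if $s=t$ then $\srw(w)=w_1\cdots w_t$ is increasing, but $w_t>w_{t+1}$ (as $t$ is a descent) and $s=t<\ell$, so $\chi(w_s>w_{s+1})=1$. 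For $\srdes(w)\le s-1$ I only need $\chi(w_s>w_{s+1})\le t-1$, which is automatic as soon as $t\ge 2$.

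The one case that requires care, and the only mild obstacle, is $t=1$: there $\des(\srw(w))=s-t$ already equals $s-1$, so I must show the character contributes nothing. Here $w_{t-1}=w_0=0$; if $s=\ell$ the character vanishes by convention, and if $s<\ell$ the alternative recorded in the remark following \eqref{eq:srw_order} says that either $w_{s+1}>w_s$ or $w_{s+1}<w_{t-1}<w_s$, and the latter is impossible since the letters of $w$ are positive. Hence $w_{s+1}>w_s$, i.e.\ $\chi(w_s>w_{s+1})=0$, and $\srdes(w)=s-1\le s-1$. Assembling the cases yields $1\le\srdes(w)\le s-1$.
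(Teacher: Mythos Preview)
Your proof is correct. The argument differs slightly in presentation from the paper's: you first derive the explicit formula $\srdes(w)=(s-t)+\chi(w_s>w_{s+1})$ from the chain \eqref{eq:srw_order} and then read off both inequalities by case analysis on $t$, whereas the paper argues each bound by contradiction (assuming $\srdes(w)=0$ forces $\srdes(w)=1$; assuming $\srdes(w)=s$ forces $\srw(w)$ strictly decreasing with $w_s>w_{s+1}$, which contradicts maximality of $s$). The substantive content is the same---in particular, the only delicate case is $t=1$, and both proofs resolve it via the dichotomy $w_{s+1}>w_s$ or $w_{s+1}<w_{t-1}$ together with positivity of the letters---but your explicit formula makes the bookkeeping a bit more transparent.
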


\begin{proof}
First of all let us prove that $\srdes(w)\geq1$ if $w$ is not increasing. If $\srdes(w)=0$ then $\des(\srw(w))=0$ and so the first descent of $w$ is exactly the one between the last element of $\srw(w)$ and the first of $w\setminus\srw(w)$. This implies that $\srdes(w)=1$, that is an absurd.   

Let us now demonstrate the other inequality.
We have that $\des(\srw(w))\leq s-1$ and so $\srdes(w)=\des(\srw(w))+\chi(w_s>w_{s+1})\leq s$ and the equality holds if and only if  $\srw(w)$ is a decreasing word and $w_s>w_{s+1}$. This is an absurd, since if $\srw(w)$ is a decreasing word and $w_{s+1}$ is not in the special reverse wave of $w$, then $w_s<w_{s+1}$.

If $w$ is an increasing word then $\srw(w)=w$ and $\srdes(w)=\des(w)=0$ as wanted.
\end{proof}

\begin{proposition}\label{prop:rinv}
Let $a=a_1\cdots a_p$ and $b=b_1\cdots b_q$ two words in $W$ with distinct entries and $1\leq d\leq q-1$. Then there exists a unique word $\rins(a,b,d)$ such that $\srw(\rins(a,b,d))$ is a rearrangement of the elements of $b$ and
\begin{align}\label{eq:rins1}
    \rins(a,b,d)\setminus\srw(\rins(a,b,d))&=a,\\
\srdes(\rins(a,b,d))&=d.\label{eq:rins2}
\end{align}
Moreover for all $w\in W$ that are not increasing,
\begin{equation}\label{eq:rins3}
    \rins(w\setminus\srw(w),\srw(w),\srdes(w))=w.
\end{equation}
\end{proposition}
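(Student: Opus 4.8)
The plan is to give an explicit formula for $\rins(a,b,d)$, check directly that it satisfies \eqref{eq:rins1}--\eqref{eq:rins2}, prove uniqueness by a short case analysis, and then deduce \eqref{eq:rins3} at once from that uniqueness.

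\emph{Construction.} The conditions \eqref{eq:rins1}--\eqref{eq:rins2} depend only on the \emph{set} of letters of $b$, not their order, so we may assume $b_1<b_2<\cdots<b_q$. By Remark~\ref{rem:unique_rwave}, for each $0\le e\le q-1$ there is a unique rearrangement $rw_e$ of $b_1,\dots,b_q$ that is a reverse wave with $\des(rw_e)=e$ ($rw_0$ being the increasing word); its last letter is $b_{q-e}$, and for $e\ge 1$ the letter just before its maximum is $b_{q-e-1}$ (or, by convention, $0$ when $e=q-1$). Set $m:=b_{q-d}$, the last letter of $rw_d$. Since the letters of $a$ and $b$ are all distinct, exactly one of ``$a$ empty'', ``$a_1>m$'', ``$a_1<m$'' holds, and I define
\[
\rins(a,b,d):=\begin{cases}
rw_d*a & \text{if $a$ is empty or $a_1>m$},\\
rw_{d-1}*a & \text{if $a$ is nonempty and $a_1<m$}.
\end{cases}
\]
(This lies in $W$ because $a$ and $b$ have disjoint letters.)

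\emph{Verification of \eqref{eq:rins1}--\eqref{eq:rins2}.} The crux, immediate from Definition~\ref{def:special_reverse_wave}, is: if $r$ is a reverse wave with at least one descent, $\mu$ its last letter and $\phi$ the letter just before its maximum (with $\phi=0$ if the maximum is first), then for every word $a$ with letters disjoint from $r$ that is empty or has $a_1\notin(\phi,\mu)$ one has $\srw(r*a)=r$ --- the first descent of $r*a$ lies inside $r$ since $r$ is not increasing, and the maximal decreasing run above the floor $\phi$ issuing from it already stops at $\mu$, never absorbing $a_1$ (which is either above $\mu$ or below $\phi$); a direct check gives the analogous statement for $r=rw_0$ when $a_1$ lies below the second-to-last letter of $rw_0$. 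In the first branch $r=rw_d$ has $\mu=m$, and $a$ is empty or $a_1>m$, so $\srw(rw_d*a)=rw_d$; in the second branch $r=rw_{d-1}$ has $\mu=b_{q-d+1}$ and, for $d\ge 2$, floor $b_{q-d}=m$ (for $d=1$ one uses the $rw_0$ case), and $a_1<m<b_{q-d+1}$, so $\srw(rw_{d-1}*a)=rw_{d-1}$. In either case $\srw(\rins(a,b,d))$ is one of the $rw_e$, hence a rearrangement of the letters of $b$, and $\rins(a,b,d)\setminus\srw(\rins(a,b,d))=a$; this is \eqref{eq:rins1}. For \eqref{eq:rins2}, Definition~\ref{def:special_reverse_des} gives $\srdes(rw_e*a)=\des(rw_e)+\chi(b_{q-e}>a_1)=e+\chi(b_{q-e}>a_1)$; in the first branch $e=d$ and the indicator is $0$, so $\srdes=d$, while in the second $e=d-1$ and $a_1<m<b_{q-d+1}$ makes the indicator $1$, so $\srdes=(d-1)+1=d$.

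\emph{Uniqueness and \eqref{eq:rins3}.} Let $w'$ satisfy \eqref{eq:rins1}--\eqref{eq:rins2}. Then $w'=\srw(w')*a$, and $\srw(w')$ is a reverse wave rearranging the letters of $b$, so $\srw(w')=rw_e$ for a unique $e\in\{0,\dots,q-1\}$ by Remark~\ref{rem:unique_rwave}. From $d=\srdes(w')=e+\chi(b_{q-e}>a_1)\in\{e,e+1\}$ we get $e\in\{d-1,d\}$. If $a$ is empty the indicator vanishes, forcing $e=d$, so $w'=rw_d$. If $a$ is nonempty, then $e=d$ requires $\chi(b_{q-d}>a_1)=0$, i.e.\ $a_1>b_{q-d}=m$ (and then indeed $\srw(rw_d*a)=rw_d$), whereas $e=d-1$ requires $\chi(b_{q-d+1}>a_1)=1$, i.e.\ $a_1<b_{q-d+1}$, together with $\srw(rw_{d-1}*a)=rw_{d-1}$; by the crux above the latter excludes $m<a_1<b_{q-d+1}$, so altogether $a_1<m$. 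Since $a_1\ne m$, exactly one of the two cases occurs and $w'$ equals the word constructed above; this is uniqueness. Finally, let $w\in W$ be not increasing and put $b:=\srw(w)$, $a:=w\setminus\srw(w)$, $d:=\srdes(w)$. By Proposition~\ref{prop:rdes_not_0}, $1\le d\le|b|-1$; moreover $w=b*a$ with $\srw(w)=b$ a (trivial) rearrangement of the letters of $b$, $w\setminus\srw(w)=a$ and $\srdes(w)=d$, so $w$ satisfies \eqref{eq:rins1}--\eqref{eq:rins2}, and by uniqueness $w=\rins(w\setminus\srw(w),\srw(w),\srdes(w))$, which is \eqref{eq:rins3}.

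\emph{Main difficulty.} The delicate phenomenon is that appending $a$ after a reverse wave can make the special reverse wave \emph{longer}, pulling in an initial segment of $a$; everything hinges on the observation that the two comparisons of $a_1$ with the consecutive letters $b_{q-d}<b_{q-d+1}$ decide exactly when this occurs, so that precisely one of $rw_d$, $rw_{d-1}$ is compatible with all three required properties. The boundary cases ($a$ empty, $e=0$ so the prefix is increasing, $e=q-1$ so the ``floor'' is $0$) are routine but must be checked on their own.
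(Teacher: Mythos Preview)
Your proof is correct and follows essentially the same route as the paper's: the same two-branch explicit candidate $rw_d*a$ versus $rw_{d-1}*a$ according to whether $a_1>b_{q-d}$ (or $a$ is empty), the same direct verification of \eqref{eq:rins1}--\eqref{eq:rins2}, the same uniqueness argument via $e\in\{d-1,d\}$, and the same deduction of \eqref{eq:rins3} from uniqueness. One minor imprecision: in the uniqueness step you invoke your ``crux'' to rule out $m<a_1<b_{q-d+1}$ when $e=d-1$, but the crux as you stated it is only the implication $a_1\notin(\phi,\mu)\Rightarrow\srw(r*a)=r$; what you actually need there is its (equally immediate) converse, namely that if $\phi<a_1<\mu$ then $a_1$ is absorbed into the special reverse wave.
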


\begin{proof}
    Let $B=\{\beta_1<\dots<\beta_q\}$ the set of the elements of $b$. 
We will prove that
\begin{equation}
    \mathsf{r}(a,b,d)=\begin{cases}
    \rwave(b,d)*a=\beta_1\cdots\beta_{q-d-1}\beta_q\cdots\beta_{q-d}a_1\cdots a_p & \text{if } \beta_{q-d} <a_1 \text{ or } p=0 \\
    \rwave(b,d-1)*a=\beta_1\cdots\beta_{q-d}\beta_q\cdots\beta_{q-d+1}a_1\cdots a_p & \text{else},
\end{cases}
\end{equation}
is the only word that satisfies the required properties, where $\rwave(b,x)$ is the only reverse wave with the same elements of $b$ and $0\leq x\leq q-1$ descents (see Remark~\ref{rem:unique_rwave}).

We have that both $\rwave(b,d)$ and $\rwave(b,d-1)$ are reverse waves of length $q$, so the special reverse wave of $\mathsf{r}(a,b,d)$ has length at least $q$. Moreover in $\rwave(b,d)$ there is at least one descent, since $d\geq1$, and if $\beta_{q-d}<a_1$ or $p=0$, then $\mathsf{r}(a,b,d)=\rwave(b,d)$. In these cases it is clear that $\mathsf{r}(a,b,d)\setminus\srw(\mathsf{r}(a,b,d))=a$ and that
\begin{equation*}
    \srdes(\mathsf{r}(a,b,d))=\des(\rwave(b,d))+\chi(\beta_{l-d}>a_1)=d+0=d.
\end{equation*}
If $p\geq1$ and $\beta_{q-d}>a_1$ then the position of the first descent of $\mathsf{r}(a,b,d)$ is $q-d+1$ and since $\beta_{q-d}>a_1$, then $\mathsf{r}(a,b,d)=\rwave(b,d-1)$.
In this case it is clear that $\mathsf{r}(a,b,d)\setminus\srw(\mathsf{r}(a,b,d))=a$ and that
\begin{equation*}
    \srdes(\mathsf{r}(a,b,d))=\des(\rwave(b,d-1))+\chi(\beta_{l-d}>a_1)=d-1+1=d.
\end{equation*}
So in every case $\mathsf{r}(a,b,d)$ satisfies the properties of the proposition. Let us now demonstrate that it is the only possible word, and so that $\rins(a,b,d)=\mathsf{r}(a,b,d)$.

Since $\srw(\rins(a,b,d))$ is a rearrangement of the elements of $b$, we obtain that 
\begin{equation*}
\srw(\rins(a,b,d))=\rwave(b,y)=\beta_1\cdots\beta_{q-y-1}\beta_q\cdots\beta_{q-y}    
\end{equation*} with a proper $0\leq y\leq q-1$, and thanks to equation~\eqref{eq:rins1} that $\rins(a,b,d)=\rwave(b,y)*a$. 
Moreover, if $p=0$ equation~\eqref{eq:rins2} implies that
\begin{align*}
    d=\srdes(\rins(a,b,d))=\des(\srw(\rins(a,b,d)))=\des(\rwave(b,y))=y
\end{align*}
and so that $\rins(a,b,d)=\mathsf{r}(a,b,d)$.

If $p>0$, equation~\eqref{eq:rins2} implies that
\begin{align*}
    d&=\srdes(\rins(a,b,d))=\des(\srw(\rins(a,b,d)))\\
    &=\des(\rwave(b,y))+\chi(\beta_{q-y}>a_1)=y+\chi(\beta_{q-y}>a_1),
\end{align*}
and so that $y=d-1$ or $y=d$. 

If $y=d-1$, then the position of the first descent in $\rins(a,b,d)$ is $q-y=q-d+1$, and since $a_1$ is not in the special reverse wave of $\rins(a,b,d)$, we have that $a_1>\beta_{q-d}$, and so that $\rins(a,b,d)=\mathsf{r}(a,b,d)$.

If $y=d$, then we have that $\chi(\beta_{q-y}>a_1)=\chi(\beta_{q-d}>a_1)=0$ and so that $\beta_{q-d}<a_1$. Then, also in this case, $\rins(a,b,d)=\mathsf{r}(a,b,d)$ as wanted.

To prove the identity of the equation~\eqref{eq:rins3} we observe that by the Proposition~\ref{prop:rdes_not_0} $1\leq\srdes(w)\leq \ell(\srw(w))-1$ and so that $\rins(w\setminus\srw(w),\srw(w),\srdes(w))$ is well-defined. Moreover $w$ is a word that satisfies the requested properties, and thanks to the just proved uniqueness of $\rins$, we have that \begin{equation*}
    \rins(w\setminus\srw(w),\srw(w),\srdes(w))=w.
\end{equation*} 
\end{proof}

Now we can define the desired bijection.

Let $\theta:W\to W$ be the map such that:
\begin{itemize}
    \item If $w=\varepsilon$ is the empty word then $\theta(\varepsilon)=\varepsilon$.
    \item Otherwise
    \begin{equation}
        \theta(w)=\theta(w\setminus\srw(w))*\delta_d(\srw(w))
    \end{equation}
    where the $*$ operator is the concatenation of words and $\delta_d(\srw(w))=c_{d+1}c_1c_2\cdots c_dc_{d+2}\cdots c_m$ where $c_1<c_2<\cdots<c_m$ are the elements of $\srw(w)$ and $d=\srdes(w)$, is the unique rearrangement of $\srw(w)$ that is a hook with $\lec$ equal to $d$..
\end{itemize}

\begin{theorem}\label{thm:bijection_2}
    The map $\theta:W\to W$ just defined is a well defined bijection of $W$, such that \begin{equation}
        \des(w)=\lec(\theta(w)).\label{eq:des_lec_2}
        \end{equation}

    Moreover the restriction to the symmetric group $\theta_{\mathcal{S}_n}:\mathcal{S}_n\to \mathcal{S}_n$ is a bijection.
\end{theorem}

\begin{proof}
We will prove by induction on the length $\ell$ of $w\in W$ that $\theta(w)$ is a rearrangement of $w$, and that it is a well defined map. If $\ell=0$ then $w=\varepsilon$ is the empty word and by definition $\theta(\varepsilon)=\varepsilon$, that is fine. If $w$ has length $\ell>0$, then by definition $w\setminus\srw(w)$ is a word of length strictly less than $\ell$, and so by the inductive hypothesis $\theta(w\setminus\srw(w))$ is a rearrangement of $w\setminus\srw(w)$. Moreover $\delta_d(\srw(w))$ is defined as a rearrangement of $\srw(w)$, so this means that $\theta(w)$ is a rearrangement of $w$, and $\theta$ is a well defined map.

To prove that $\theta$ is a bijection we will define its inverse $\eta$ as the map such that:
\begin{itemize}
    \item If $w=\varepsilon$ is the empty word then $\eta(\varepsilon)=\varepsilon$.
    \item Otherwise, if the rightmost hook $\h(w)$ of $w$ is not trivial
    \begin{equation}
        \eta(w)=\rins(\eta(w\setminus \h(w)),\h(w),\lec(\h(w))),
    \end{equation}
    where $w\setminus \h(w)$ is the prefix of $w$ obtained removing $\h(w)$. If $\h(w)$ is a trivial hook, then $\h(w)=w$ and in this case $\eta(w)=w$
\end{itemize}

We can notice that from the definition of $\theta$ we have that if $w$ is increasing then $\srw(w)=w$ and $\theta(w)=w$, and so it is clear that $\theta$ and $\eta$ are inverse when restricted to the subset of increasing words, because they are both the identity in this case.

Now let us consider the case where $w\in W$ is not increasing.

Since $w\setminus\h(w)$ has length strictly less than $w$ and, by definition the special reverse wave of $\eta(w)$ is a rearrangement of $\h(w)$, using inductive reasoning, entirely analogous to what was done previously for $\theta$, we obtain that $\eta(w)$ is a rearrangement of $w$ and that it is a well defined map. 

Let us prove that they are inverse map by induction on the length $\ell$ of $w$. If $\ell=0$ it is clear. If $\ell>0$ then
\begin{align}
\theta(\eta(w))&=\theta(\rins(\eta(w\setminus\h(w)),\h(w),\lec(\h(w))))\notag\\
&=\theta(\eta(w\setminus\h(w)))*\delta_{\lec(\h(w))}(\h(w))\\
&=(w\setminus\h(w))*\h(w)=w,\notag
\end{align}
where the second equality is true thanks to Proposition~\ref{prop:rinv} and because the special reverse wave of $\rins(\eta(w\setminus\h(w)),\h(w),\lec(\h(w)))$ is a rearrangement of $\h(w)$. The last equality is true thanks to inductive hypothesis, since $w\setminus\h(w)$ has length strictly less than $\ell$, and thanks to the fact that the unique rearrangement of the elements of $\h(w)$ that is a hook with $\lec$ equal to $\lec(\h(w))$ is $\h(w)$ itself. Moreover
\begin{align}
    \eta(\theta(w))&=\rins(\eta(\theta(w)\setminus\h(\theta(w))),\h(\theta(w)),\lec(\h(\theta(w))))\notag\\
    &=\rins(\eta(\theta(w)\setminus\delta_{\srdes(w)}(\srw(w))),\delta_{\srdes(w)}(\srw(w)),\srdes(w))\\
    &=\rins(\eta(\theta(w\setminus\srw(w))),\srw(w),\srdes(w))\notag\\
    &=\rins(w\setminus\srw(w)),\srw(w),\srdes(w))=w\notag,
\end{align}
where the second equality is true since the rightmost hook of $\theta(w)$ is by definition $\delta_{\srdes(w)}(\srw(\theta(w)))$ and $\lec(\h(\theta(w)))=\srdes(w)$, the third equality is true since $\delta_{\srdes(w)}(\srw(w))$ is a rearrangement of $\srw(w)$ and by the definition of $\rins$ and thanks to the definition of $\theta(w)$. The fourth equality holds by inductive hypothesis, since the length of $w\setminus\srw(w)$ is strictly less than $\ell$ and the last is true thanks to Proposition~\ref{prop:rinv}. We have proved that $\theta$ and $\eta$ are inverse, and so that they are bijections of $W$. Moreover we have already proved that they permute the elements of the words, so that their restrictions to the symmetric group $\mathcal{S}_n$ are two bijections.

We can also show the identity of equation~\eqref{eq:des_lec_2} by an inductive argument on the length $\ell$ of $w\in W$. If $\ell=0$, then $w=\varepsilon$ is the empty word and $\lec(\theta(\varepsilon))=\lec(\varepsilon)=0=\des(\varepsilon$. If $w$ has length $\ell>0$ we have that
\begin{align}
    \lec(\theta(w))&=\lec(\theta(w\setminus\srw(w)))+\lec(\delta_{\srdes(w)}(\srw(w))\notag\\
    &=\des(w\setminus\srw(w))+\srdes(w)=\des(w),
\end{align}
where the first equality holds since $\delta(\srw(w))$ is the rightmost hook of $\theta(w)$, the second is true by inductive hypothesis and by definition of $\delta_{\srdes(w)}$. The last equality is true thanks to Remark~\ref{rem:des_rspezza}.
\end{proof}

\begin{example}
Let $\sigma=45762318\in \mathcal{S}_8\subset W$, then
\begin{align*}
    \theta(45762318)&=\theta(2318)*\delta_2(4576)=\theta(18)*\delta_1(23)*6457\\
    &=\theta(\varepsilon)+\delta_0(18)*32*6457=\varepsilon*18*32*6457 \\
    &=18326457
\end{align*}
and 
\begin{align*}
    \eta(18326457)&=\rins(\eta(1832),6457,2)=\rins(\rins(\eta(18),32,1),6457,2)\\
    &=\rins(\rins(18,32,1),6457,2)=\rins(2318,6457,2)=45762318.
\end{align*}
Moreover $\des(45762318)=3=\lec(18326457)=\lec(\theta(45762318))$.
\end{example}

\begin{remark}\label{rem:cat_bij_2}
    If $w=\gamma*\alpha_1*\cdots*\alpha_k$ is the Hook factorization of a word $w\in W$, then we can write
    \begin{equation*}
        w=\conc(\cdots\conc(\conc(\gamma,\alpha_1),\alpha_2)\cdots     ,\alpha_k),
    \end{equation*}
    where $\conc$ is the binary operation that concatenates words, i.e. $\conc(w',w'')=w'*w''$ for all $w',w''\in W$.

We have that the map $\eta$ is nothing more than the map obtained by replacing each $\conc$ with $\rins$, namely:
\begin{equation*}
    \eta(w)=\rins(\cdots\rins(\rins(\gamma,\alpha_1,\lec(\alpha_1)),\alpha_2,\lec(\alpha_2))\cdots     ,\alpha_k,\lec(\alpha_k)).
    \end{equation*}

Theorem~\ref{thm:bijection_2} therefore also implies that for every word in $W$, it can be obtained, in a unique manner, through an ordered and finite sequence of r-insertions of reverse waves, starting from an increasing word. The map $\theta$ will then be analogously the one that replaces reverse waves $\beta_i$ with the corresponding hooks $\delta(\beta_i)$ and r-insertion operations with concatenation operations.

In particular, the set of permutations $\sigma\in\mathcal{S}_n$ with $\lec(\sigma)=i$ and whose rightmost hook has length at least $n-k$ is in bijection, via $\eta$ and $\theta$, with the set of permutations $\sigma\in\mathcal{S}_n$ with $\des(\sigma)=i$ and whose reverse special wave has length at least $n-k$. 
\end{remark}

\section{An explicit bijection in the general case of permutations of \texorpdfstring{$k+1$}{k+1} different numbers}
\label{sec:generalexplicit}
In this section we will show an explicit combinatorial bijection between the set of permutations of $k+1$ different numbers in $[n]$ with $i$ descents and the subset of $\mathcal{S}_n$ made of the permutations $\sigma$ such that $\lec(\sigma)=i$ and whose first hook from the right has cardinality at least $n-k$, proving the Corollary~\ref{corollario:bigezionek} in a combinatorial way.

To do this we will construct a bijection that preserves the number of descents between the set of permutations of $k+1$ different numbers in $[n]$ and the subset of $\mathcal{S}_n$ of permutations with special reverse wave of length at least $n-k$. Thanks to the Theorem~\ref{thm:bijection_2} and to the Remark~\ref{rem:cat_bij_2}, composing this bijection with the map $\theta$ of the previous section, gives us the desired bijection.  
\medskip

The first step is the construction of a descent preserving bijection between $\mathcal{S}_{n,k+1}$ and a proper subset of $\mathcal{S}_n$

\begin{theorem}\label{thm:bij_togli_par}
    The map $\mu:\mathcal{S}_{n,k+1}\to\{\sigma\in\mathcal{S}_n\mid \ell(\srw(\sigma))\geq n-k\}$ such that for every $(\alpha_0,\rho)\in\mathcal{S}_{n,k+1}$
    \begin{equation}
        \mu((\alpha_0,\rho))=\alpha_1\alpha_2\cdots\alpha_{n-k-m-1}\alpha_{n-k-1}\alpha_{n-k-2}\cdots\alpha_{n-k-m}\rho_1\cdots\rho_{k+1}
    \end{equation}
    where $(\alpha_0,\rho)=(\{\alpha_1<\dots<\alpha_{n-k-1}\},\rho_1\cdots\rho_{k+1})$ and $m=\#\{\alpha\in\alpha_0\mid\alpha>\rho_1\}$, is a bijection that preserves the number of descents.

    Its inverse is the map $\nu:\{\sigma\in\mathcal{S}_n\mid \ell(\srw(\sigma))\geq n-k\}\to\mathcal{S}_{n,k+1}$ such that for every $\sigma=\sigma_1\cdots\sigma_n$
    \begin{equation}
        \nu(\sigma)=(\{\sigma_1,\dots,\sigma_{n-k-1}\},\sigma_{n-k}\cdots\sigma_n).
    \end{equation}
\end{theorem}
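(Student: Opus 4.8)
## Proof proposal for Theorem \ref{thm:bij_togli_par}

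The plan is to verify three things: that $\mu$ lands in the claimed codomain, that $\nu\circ\mu$ and $\mu\circ\nu$ are the identity maps on their respective domains, and that $\des((\alpha_0,\rho)) = \des(\mu((\alpha_0,\rho)))$. The key structural observation is that $\mu$ rearranges only the entries of $\alpha_0$ (placing them before the fixed word $\rho_1\cdots\rho_{k+1}$), and it does so in exactly the pattern that Remark~\ref{rem:unique_rwave} identifies as the unique reverse wave on those $n-k-1$ entries having $m$ descents, where $m=\#\{\alpha\in\alpha_0\mid\alpha>\rho_1\}$.

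First I would analyze the word $\mu((\alpha_0,\rho))$ directly. Writing $\alpha_0=\{\alpha_1<\cdots<\alpha_{n-k-1}\}$, the prefix $\alpha_1\cdots\alpha_{n-k-m-1}\alpha_{n-k-1}\alpha_{n-k-2}\cdots\alpha_{n-k-m}$ is increasing up to position $n-k-m-1$, then strictly decreasing through position $n-k-1$, and its maximum $\alpha_{n-k-1}$ sits at position $n-k-m$; moreover the last entry of this prefix is $\alpha_{n-k-m}$, and by definition of $m$ we have $\alpha_{n-k-m}>\rho_1$ precisely when $m\ge 1$ (since the $m$ largest elements of $\alpha_0$ are exactly those exceeding $\rho_1$), while $\alpha_{n-k-m-1}<\rho_1$. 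Comparing with Definition~\ref{def:special_reverse_wave}: when $m\ge 1$ the first descent of $\mu((\alpha_0,\rho))$ is at position $t=n-k-m-1$ (between $\alpha_{n-k-m-1}$ and $\alpha_{n-k-1}$; note $\alpha_{n-k-m-1}<\alpha_{n-k-1}$ is irrelevant, the descent is $\alpha_{n-k-1}>\alpha_{n-k-2}$ — I would recheck indices here), the decreasing run $\alpha_{n-k-1}>\cdots>\alpha_{n-k-m}$ all exceed $w_{t-1}=\alpha_{n-k-m-2}$, and it cannot be extended since $\alpha_{n-k-m}>\rho_1$ forces $w_{s+1}=\rho_1<w_s$ but also $\rho_1<w_{t-1}$? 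This needs care: $\srw$ ends when the next element is $<w_{t-1}$ or starts a new ascent. The point to nail down is that $\srw(\mu((\alpha_0,\rho)))$ is exactly the reshuffled $\alpha_0$-prefix, hence has length $n-k-1\ge n-k-1$; wait, the claim is $\ge n-k$. So in fact $\srw$ must also swallow $\rho_1$. I would show $\rho_1$ is appended to the reverse wave: indeed $\alpha_{n-k-m}>\rho_1>\alpha_{n-k-m-1}=w_{t-1}$, so $\rho_1$ continues the decreasing run and $\srw$ has length $\ge n-k$, with $\des(\srw)=m$. When $m=0$, the prefix is all of $\alpha_0$ increasing and $\alpha_{n-k-1}<\rho_1$, so $\srw$ starts increasing and absorbs $\rho_1\cdots$ until $\rho$'s first descent; again length $\ge n-k$. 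This establishes $\mu$ is well-defined into the codomain.

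Next, for the bijection: the map $\nu$ simply strips the first $n-k-1$ entries as the set $\alpha_0$ and reads the rest as $\rho$. For $\nu(\mu((\alpha_0,\rho)))$, since the first $n-k-1$ entries of $\mu((\alpha_0,\rho))$ are exactly (a permutation of) $\alpha_0$ and the remaining $k+1$ are $\rho_1\cdots\rho_{k+1}$ in order, we recover $(\alpha_0,\rho)$ on the nose. For $\mu(\nu(\sigma))$ with $\ell(\srw(\sigma))\ge n-k$: setting $\alpha_0=\{\sigma_1,\dots,\sigma_{n-k-1}\}$ and $\rho=\sigma_{n-k}\cdots\sigma_n$, one must check that the first $n-k-1$ entries of $\sigma$, being a prefix of $\srw(\sigma)$ (by Remark~\ref{rem:pref_rw} a reverse wave of length $n-k-1$), are arranged in exactly the reverse-wave pattern dictated by $\mu$ with the correct $m$; this follows from the uniqueness in Remark~\ref{rem:unique_rwave} once we check $m=\#\{\alpha\in\alpha_0\mid\alpha>\sigma_{n-k}\}$ equals the number of descents among $\sigma_1\cdots\sigma_{n-k-1}$, which is forced by $\srw(\sigma)$ having length $\ge n-k$ (so $\sigma_{n-k}$ belongs to the reverse wave, pinning down how many entries of the prefix exceed it). The main obstacle I anticipate is precisely this index bookkeeping — matching the off-by-$m$ cut point in the formula for $\mu$ against the two cases (first descent inside $\alpha_0$ versus $\alpha_0$ entirely increasing) in Definition~\ref{def:special_reverse_wave}, and confirming in each case that the reverse wave has length exactly $n-k-1+(\text{1 if }m\ge 1\text{ else }\ge 1)$, i.e. at least $n-k$.

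Finally, descent preservation: $\des((\alpha_0,\rho)) = \des(\rho_1\cdots\rho_{k+1}) + \#\{a\in\alpha_0\mid a>\rho_1\} = \des(\rho) + m$ by the definition in Section~1. On the other side, $\mu((\alpha_0,\rho))$ decomposes as the reshuffled-$\alpha_0$ prefix followed by $\rho$; the prefix contributes exactly $m$ descents (the decreasing run $\alpha_{n-k-1}>\cdots>\alpha_{n-k-m}$), the junction $\alpha_{n-k-m}$ versus $\rho_1$ contributes a descent iff $\alpha_{n-k-m}>\rho_1$ iff $m\ge 1$ — but wait, that would double-count. The correct accounting: if $m\ge 1$, the prefix $\alpha_1\cdots\alpha_{n-k-m-1}\alpha_{n-k-1}\cdots\alpha_{n-k-m}$ has $m-1$ internal descents? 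No — $\alpha_{n-k-1}>\alpha_{n-k-2}>\cdots>\alpha_{n-k-m}$ is a decreasing sequence of $m$ terms, giving $m-1$ descents, plus the descent $\alpha_{n-k-1}$ vs. the preceding $\alpha_{n-k-m-1}$? That's an \emph{ascent} since $\alpha_{n-k-m-1}<\alpha_{n-k-1}$. So the prefix alone has $m-1$ descents; the junction to $\rho_1$ gives $1$ more (since $\alpha_{n-k-m}>\rho_1$); total $m$ from outside $\rho$, plus $\des(\rho)$, giving $m+\des(\rho)=\des((\alpha_0,\rho))$. If $m=0$, the prefix is increasing, the junction $\alpha_{n-k-1}<\rho_1$ is an ascent, total $0+\des(\rho)=\des((\alpha_0,\rho))$. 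This case split handles it; I would write it up cleanly using $\des$ of a concatenation $= \des(\text{first}) + \chi(\text{junction}) + \des(\text{second})$ as in Remark~\ref{rem:des_rspezza}. $\square$
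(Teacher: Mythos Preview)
Your approach is essentially the same as the paper's: verify that the first $n-k$ entries of $\mu((\alpha_0,\rho))$ form a reverse wave (so the codomain is correct), check $\nu\circ\mu$ and $\mu\circ\nu$ directly, and count descents by splitting at the junction between the $\alpha_0$-prefix and $\rho$. The paper streamlines the argument by observing at the outset that $\mu((\alpha_0,\rho))$ is precisely \emph{the unique reverse wave on the set $\alpha_0\sqcup\{\rho_1\}$ with $m$ descents} concatenated with $\rho_2\cdots\rho_{k+1}$; invoking Remark~\ref{rem:unique_rwave} then handles well-definedness, $\mu\circ\nu$, and descent preservation in one stroke, avoiding the case split on $m=0$ versus $m\ge 1$ and the index bookkeeping you flag as the main obstacle. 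Your final descent count ($m-1$ internal descents in the decreasing tail plus $1$ at the junction when $m\ge 1$, and $0$ when $m=0$) is correct, as is your observation that $\rho_1$ is swallowed by the special reverse wave because $\alpha_{n-k-m}>\rho_1>\alpha_{n-k-m-1}$; but note your claim ``$\des(\srw)=m$'' is not needed and not generally true (the special reverse wave may extend past $\rho_1$), and the index $t$ for the first descent should be $n-k-m$ (the position of $\alpha_{n-k-1}$), not $n-k-m-1$.
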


\begin{proof}
If we call $\sigma=(\alpha_0,\rho)$, by definition of the map $\mu$ we have that $$\alpha_1\alpha_2\cdots\alpha_{n-k-m-1}\alpha_{n-k-1}\alpha_{n-k-2}\cdots\alpha_{n-k-m}\rho_1$$ is a prefix of $\sigma$, of length $n-k$, that is also a reverse wave. By Remark~\ref{def:special_reverse_wave} $\srw(\sigma)$ is the maximal prefix of $\sigma$ that is a reverse wave and so we have that $\srw(\sigma)$ has length at least $n-k$. This implies that the map $\mu$ is well-defined.

In particular the map $\mu$ is the concatenation of the unique reverse wave with elements in $\alpha_0\sqcup\{\rho_1\}$ and $m$ descents with $\rho_2\cdots\rho_{k+1}$.

The number of descents of $(\alpha_0,\rho)$ is by definition
\begin{equation*}
   \des((\alpha_0,\rho))=m+\des(\rho)= \des(\mu((\alpha_0,\rho)))
\end{equation*}
and so we have that $\mu$ preserves the number of descents.

Let us demonstrate that the map $\nu$ is the inverse of $\mu$. We have that
\begin{align*}
    \nu(\mu((\alpha_0,\rho)))&=\nu(\alpha_1\alpha_2\cdots\alpha_{n-k-m-1}\alpha_{n-k-1}\alpha_{n-k-2}\cdots\alpha_{n-k-m}\rho_1\cdots\rho_{k+1})\\
    &=(\{\alpha_1,\dots,\alpha_{n-k-1}\},\rho_1\cdots\rho_{k+1})=(\alpha_0,\rho).
\end{align*}
Morover if $\sigma=\sigma_1\cdots\sigma_n\in\{\sigma\in\mathcal{S}_n\mid \ell(\srw(\sigma))\geq n-k\}$ and $\srw(\sigma)=\sigma_1\cdots\sigma_s$, with $s\geq n-k$, we have that $\sigma_1\cdots\sigma_{n-k-1}\sigma_{n-k}$ is a prefix of a reverse wave, and so, thanks to Remarks~\ref{rem:pref_rw} and \ref{rem:unique_rwave}, it is the unique reverse wave, with exactly $y=\#\{1\leq i\leq n-k-1\mid\sigma_i>\sigma_{n-k}\}$ descents. Therefore we have that 
\begin{align*}
    \mu(\nu(\sigma))&=\mu((\{\sigma_1,\dots,\sigma_{n-k-1}\},\sigma_{n-k}\sigma_{n-k+1}\cdots\sigma_n)) \\
&=\sigma_1\cdots\sigma_{n-k-1}\sigma_{n-k}*\sigma_{n-k+1}\cdots\sigma_n=\sigma,
\end{align*}
thanks to the previous characterization of $\mu$. 
\end{proof}

\begin{example}
Let $(\alpha_0,\tau)=(\{4,5,7\},62318)\in \mathcal{S}_{8,5}$, then we have that ${m=\{\alpha\in\{4,5,7\}\mid\alpha>6\}=1}$ and so $\mu((\{4,5,7\},62318))=45762318$. Clearly $\nu(45762318)=(\{4,5,7\},62318)$. 

Moreover ${\des((\{4,5,7\},62318))=1+2=3=\des(45762318)}$ and $\ell(\srw(45762318))=\ell(4576)=4\geq4=8-4$.
\end{example}

We have proved, thanks to Remark~\ref{rem:cat_bij_2}, Theorems~\ref{thm:bijection_2} and \ref{thm:bij_togli_par} that the map $\theta\circ\mu:\mathcal{S}_{n,k+1}\to\{\sigma\in\mathcal{S}_n\mid\ell(\h(\sigma))\geq n-k\}$ is a bijection such that, for every $(\alpha_0,\rho)\in\mathcal{S}_{n,k+1}$ we have that $\des((\alpha_0,\rho))=\lec(\theta(\mu((\alpha_0,\rho))))$.

This reprove the Corollary~\ref{corollario:bigezionek}.

\section{Second bijection (Foata--Han) preserving des-lec statistic: waves}
\label{sec:foata}
In this section we present a different  bijection $\psi$ of the symmetric group $\mathcal{S}_n$ that sends the $\des$ statistic to the $\lec$ statistic. 

This bijection $\psi$ appears to be equal, unless we compose with easy bijections, to the map $F$ defined by Foata and Han in \cite{FH08}*{Section~6}). In particular, it seems to hold that:
\begin{equation}
    \psi=\mathbf{c}\circ F \circ \mathbf{c}\circ\mathbf{r},
\end{equation}
where $\mathbf{c}(\sigma)=(n+1-\sigma_1)\dots(n+1-\sigma_n)$ and $\mathbf{r}(\sigma)=\sigma_n\dots\sigma_1$. We have verified this equality computationally for $n$ small. We present it anyway because it can be constructed explicitly and directly, using the methods described in the previous sections.

To do so, we will present an explicit bijection of the set $$W=\{a_1\cdots a_{\ell}\mid \ell\in\mathbb{N},\,a_i\in\mathbb{N}^*\,\forall i\text{ and }a_i\neq a_j\,\forall\, i\neq j\}$$ of all finite words with distinct entries, such that if we restrict to the symmetric group $\mathcal{S}_n$, it becomes a bijection of $\mathcal{S}_n$. 

We will define a new way of evaluating descents by decomposition in smaller words.

\begin{definition}
    Let $w=w_1\cdots w_{\ell}\in W$ be a non-empty word of length $\ell$. Let $c$ be the position in $w$ of the maximum element of $w$, we say that $w$ is a \textit{wave} if $c\neq\ell$ and 
    \begin{equation}
        w_{\ell}<w_{\ell-1}<\dots<w_{c+1}<w_1<w_2<\dots<w_c.
    \end{equation}
    We will also denote a word with elements in increasing order, that is the case with $c=\ell$, as a \textit{trivial wave}.
\end{definition}

\begin{remark}
    If $w$ is a wave of length $\ell $ and $c$ is the position of its maximum, then $\des(w)=\ell-c$.
\end{remark}

\begin{definition}\label{def:special_wave}
    Let $w=w_1\cdots w_{\ell}\in W$ be a non-empty word of length $\ell $. The \textit{special wave} of $w$ is the subword $\sw(w)=w_rw_{r+1}\cdots w_t\cdots w_{s-1}w_s$ of $w$ where:
    \begin{itemize}
        \item $1\leq t<\ell$ is the index of the first descent of $w$, so it is the minimum such that $w_t>w_{t+1}$. If $w$ has no descents, if and only if the elements of $w$ are in increasing order, then $\sw(w)=w$.
        \item $1\leq r\leq t$ is the minimum such that $w_r>w_{t+1}$. It exists since $w_t>w_{t+1}$.
        \item $t<s\leq\ell$ is the maximum such that $w_{t+1}>w_{t+2}>\dots>w_{s-1}>w_s>w_{r-1}$, where, if $r=1$, we consider $w_0=0$. It exists since $w_{t+1}>w_{r-1}$, by minimality of $r$, and if $r=1$, then is it true that $w_{t+1}>0=w_{r-1}$ by definition of $W$.
    \end{itemize}    
\end{definition}

\begin{remark}
    If $\sw(w)=w_rw_{r+1}\cdots w_t\cdots w_{s-1}w_s$ is the special wave of $w$, then we have 
    \begin{equation}\label{eq:sw_order}
    w_1<w_2<\dots<w_{r-1}<w_s<w_{s-1}<\dots<w_{t+1}<w_r<\dots<w_t
    \end{equation}
    and in particular that $\sw(w)$ is a wave, since
    \begin{equation}
    w_s<w_{s-1}<\dots<w_{t+1}<w_r<\dots<w_t.
    \end{equation}
    We also have, if $s\neq\ell$, that $w_{s+1}>w_s$ or $w_{s+1}<w_{r-1}$.

    If $w$ has no descents, then $\sw(w)=w$, and in this case $\sw(w)$ is a trivial wave.
\end{remark}

We will denote by $w\setminus\sw(w)=w_1\cdots w_{r-1}w_{s+1}\cdots w_{\ell}$ the word obtained by removing from $w$ its special wave. If $\sw(w)=w$ then $w\setminus\sw(w)$ is the empty word.

\begin{example}\label{ex_sw}
Let $\sigma=13675428\in \mathcal{S}_8\subset W$, then its special wave is $\sw(\sigma)=6754$ and $w\setminus\sw(w)=1328$. From this example we can notice that the special wave is not necessarily maximal, indeed $67542$ is also a wave, but $2<3$.
\end{example}

We can define a map that reverses the operation of removing the special reverse wave.

\begin{proposition}\label{prop:inv}
Let $a=a_1\cdots a_p$ and $b=b_1\cdots b_q$ two words in $W$ with distinct entries and such that $b$ is a non trivial wave. Let $0\leq x\leq p$ be the maximum such that $a_1<a_2<\cdots<a_x<b_q$, where $x=0$ if $a_1>b_q$. Then the map $\ins(a,b)=a_1\cdots a_xb_1\cdots b_qa_{x+1}\cdots a_p$ satisfies
\begin{align}\label{eq:ins1}
    \sw(\ins(a,b))=b,\\
    \ins(a,b)\setminus\sw(\ins(a,b))&=a,\label{eq:ins2}
\end{align}
and, for all $w\in W$ that are not increasing,
\begin{equation}\label{eq:ins3}
    \ins(w\setminus\sw(w),\sw(w))=w.
\end{equation}
\end{proposition}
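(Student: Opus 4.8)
The plan is to verify the three claimed identities directly from the definitions of $\sw$, the special wave, and the insertion operation $\ins$, treating $\ins(a,b)$ as an explicit word and checking that its first descent, its special wave and the complementary word all come out as stated.

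First I would establish \eqref{eq:ins1} and \eqref{eq:ins2} together. Write $\ins(a,b)=a_1\cdots a_x b_1\cdots b_q a_{x+1}\cdots a_p$ with $x$ chosen maximal so that $a_1<\cdots<a_x<b_q$. Since $b$ is a non-trivial wave with maximum in position $c<q$, we have $b_1<\cdots<b_c$ is increasing up to the max, and $b_1>b_{c+1}>\cdots>b_q$. In particular $b_1>b_2$ is \emph{false} but the first descent of $b$ occurs at position $c$; meanwhile $a_1<\cdots<a_x<b_q<b_1$, so the prefix $a_1\cdots a_x b_1$ is strictly increasing and the first descent of $\ins(a,b)$ sits at position $x+c$, inside the copy of $b$. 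So the ``$t$'' in Definition~\ref{def:special_wave} applied to $\ins(a,b)$ equals $x+c$ and $w_{t+1}=b_{c+1}$. Next I would check that the ``$r$'' of Definition~\ref{def:special_wave} equals $x+1$: we need the minimum index with $w_r>w_{t+1}=b_{c+1}$; since $b_1>\cdots>b_q$ past the max and $b_1$ is the max after $b_c$... more carefully, $w_{x+1}=b_1$ and $b_1>b_{c+1}$ because $b_1$ precedes the max $b_c$ in the wave ordering $b_q<\cdots<b_{c+1}<b_1<\cdots<b_c$; and for $i\le x$ we have $w_i=a_i<b_q<b_{c+1}$ (using $a_i\le a_x<b_q$), so $r=x+1$ indeed. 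Then the ``$s$'' is the maximum index with $b_{c+1}>b_{c+2}>\cdots>w_s>w_{r-1}=a_x$ (or $0$ if $x=0$); since $b_q<b_{q-1}<\cdots<b_{c+1}$ and $a_x<b_q$ (by choice of $x$), the decreasing run $b_{c+1}>\cdots>b_q$ all exceeds $a_x$, so $s=x+q$, i.e. the special wave is exactly $b_1\cdots b_q=b$. This proves \eqref{eq:ins1}, and then \eqref{eq:ins2} is immediate: removing positions $x+1,\dots,x+q$ from $\ins(a,b)$ leaves $a_1\cdots a_x a_{x+1}\cdots a_p=a$.

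For \eqref{eq:ins3}, let $w\in W$ be non-increasing, and write $w=w_1\cdots w_{r-1}(\sw(w))w_{s+1}\cdots w_\ell$, so that $a:=w\setminus\sw(w)=w_1\cdots w_{r-1}w_{s+1}\cdots w_\ell$ and $b:=\sw(w)$, which is a non-trivial wave (non-trivial because $w$ has a descent, hence $t<\ell$ exists and so does $s>t$). I must check that the $x$ appearing in the definition of $\ins(a,b)$ is exactly $r-1$, so that $\ins(a,b)$ reassembles $w$. By \eqref{eq:sw_order} the first $r-1$ entries of $a$ are $w_1<\cdots<w_{r-1}<w_s=b_q$, so $x\ge r-1$; and the next entry of $a$ is $w_{s+1}$, which by the last remark before the proposition satisfies $w_{s+1}>w_s=b_q$ or $w_{s+1}<w_{r-1}<w_s=b_q$ — in the first case $x=r-1$ directly, and in the second case $w_{s+1}<w_{r-1}$ would break the strict increase $a_1<\cdots<a_{r-1}<a_r=w_{s+1}$, so again $x=r-1$. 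Hence $\ins(a,b)=w_1\cdots w_{r-1}\,b_1\cdots b_q\,w_{s+1}\cdots w_\ell=w$.

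The main obstacle is the bookkeeping in the middle step: pinning down that the indices $t,r,s$ computed for $\ins(a,b)$ coincide with the structural parameters built into $b$ as a wave (namely $t+x=x+c$, $r=x+1$, $s=x+q$), which relies on the precise wave-ordering inequalities and on the maximality in the choice of $x$. Once those index identifications are nailed down, all three equations follow by just reading off the word, so I would spend most of the write-up making those inequality chains explicit and would keep \eqref{eq:ins3} short by invoking the ``$w_{s+1}>w_s$ or $w_{s+1}<w_{r-1}$'' remark stated just after the definition of the special wave.
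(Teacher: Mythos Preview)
Your approach is correct and is essentially the paper's own argument: locate the parameters $t,r,s$ of Definition~\ref{def:special_wave} for $\ins(a,b)$ at $x+c$, $x+1$, $x+q$ respectively, and for \eqref{eq:ins3} use \eqref{eq:sw_order} together with the dichotomy $w_{s+1}>w_s$ or $w_{s+1}<w_{r-1}$ to recover $x=r-1$.

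One small gap to close in the write-up: when you argue that $s=x+q$, what you actually show is only $s\ge x+q$ (the decreasing tail $b_{c+1}>\cdots>b_q$ stays above $a_x$). You still need to rule out $s=x+q+1$, i.e.\ that $a_{x+1}$ does not prolong the special wave. This is exactly where the maximality of $x$ enters: failure of the condition at $x+1$ means either $x=p$, or $a_{x+1}>b_q$ (so the strict decrease breaks), or $a_{x+1}<a_x=w_{r-1}$ (so the lower threshold is violated); in every case $s$ stops at $x+q$. You flag the role of maximality in your closing paragraph but do not invoke it at this step, so insert that one sentence and the argument is complete.
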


\begin{proof}
We observe that, since $b$ is a wave and by the definition of $x$, we have that $b_i>b_q>a_x>\cdots>a_1$, for every $1\leq i<q$. Moreover we have by maximality of $x$ that $a_{x+1}>b_q$, or $x=p$. This implies that the first descent of $\ins(a,b)$ is the first descent of $b$, and that the extremal points of its special wave are exactly $b_1$ and $b_p$, and so that $\sw(\ins(a,b))=b$. The equation~\eqref{eq:ins2} is clear from the definition of $\ins$ and from equation~\eqref{eq:ins1}.

To prove the third property, let $w=w_1\cdots w_{\ell}\in W$ be a word and $\sw(w)=w_rw_{r+1}\cdots w_t\cdots w_{s-1}w_s$ its special wave, with $t$ as in the Definition~\ref{def:special_wave}. From equation~\eqref{eq:sw_order} and the fact that if $s\neq\ell$ then $w_{s+1}>w_s$ or $w_{s+1}<w_{r-1}$, we obtain precisely, from the definition, that $\ins(w\setminus\sw(w),\sw(w))=w$.
\end{proof}

\begin{example}
    Let $a=1328$ a word and $b=6754$ a wave. Then $\ins(a,b)=13675428$ and we have already seen in the previous example that $\sw(13675428)=6754=b$. 
\end{example}

Thanks to the definition of the special wave of a word $w$ we can evaluate the statistic $\des$ in a recursive way.

\begin{proposition}\label{prop:des_spezza}
Let $w=w_1\cdots w_{\ell}\in W$ be a word and $\sw(w)=w_rw_{r+1}\cdots w_t\cdots w_{s-1}w_s$ its special wave. Then we have that
\begin{equation}\label{eq:des_spezza}
\des(w)=\des(w\setminus\sw(w))+\des(\sw(w)).
\end{equation}   
\end{proposition}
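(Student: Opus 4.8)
The plan is to mimic the argument of Remark~\ref{rem:des_rspezza}, splitting the word $w$ at the two ``cut points'' that delimit its special wave, namely just before position $r$ and just after position $s$. Writing $w = w_1\cdots w_{r-1}\,\cdot\, w_r\cdots w_s \,\cdot\, w_{s+1}\cdots w_\ell$, the statistic $\des$ is additive along any concatenation except that each junction may contribute one extra descent. So in general $\des(w) = \des(w_1\cdots w_{r-1}) + \des(\sw(w)) + \des(w_{s+1}\cdots w_\ell) + \chi(w_{r-1}>w_r) + \chi(w_s>w_{s+1})$, and on the other side $\des(w\setminus\sw(w)) = \des(w_1\cdots w_{r-1}\,w_{s+1}\cdots w_\ell) = \des(w_1\cdots w_{r-1}) + \des(w_{s+1}\cdots w_\ell) + \chi(w_{r-1}>w_{s+1})$. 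Thus \eqref{eq:des_spezza} reduces to the identity $\chi(w_{r-1}>w_r) + \chi(w_s>w_{s+1}) = \chi(w_{r-1}>w_{s+1})$, which must be verified using the structural facts about $\sw(w)$.

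First I would dispose of the trivial case: if $w$ has no descents then $\sw(w)=w$, the word $w\setminus\sw(w)$ is empty, and both sides equal $0$. Next, if $s=\ell$ but $r>1$, the suffix $w_{s+1}\cdots w_\ell$ is empty and the claimed identity degenerates to $\chi(w_{r-1}>w_r)=0$: this holds because $r$ is the \emph{minimal} index with $w_r>w_{t+1}$, so $w_{r-1}\le w_{t+1}<w_r$ (using \eqref{eq:sw_order}, which places $w_{r-1}$ below $w_r$), and if $r=1$ there is nothing to check. The main case is $r>1$ and $s<\ell$. Here I invoke two facts recorded earlier: from \eqref{eq:sw_order} we have $w_{r-1}<w_r$, so $\chi(w_{r-1}>w_r)=0$, and the identity to prove becomes simply $\chi(w_s>w_{s+1}) = \chi(w_{r-1}>w_{s+1})$. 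The remark following Definition~\ref{def:special_wave} tells us that since $s\neq\ell$, either $w_{s+1}>w_s$ or $w_{s+1}<w_{r-1}$. In the first subcase both indicator functions are $0$ (for the right one, note $w_{r-1}<w_s<w_{s+1}$ by \eqref{eq:sw_order}). In the second subcase $w_{s+1}<w_{r-1}<w_s$ by \eqref{eq:sw_order}, so both indicators equal $1$. In either subcase the two sides agree, completing the proof.

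The only genuinely delicate point is the bookkeeping in the case $r=1$, where $w_{r-1}$ is not a real letter and the conventions ``$w_0=0$'' from Definition~\ref{def:special_wave} must be used consistently: when $r=1$ the word $w_1\cdots w_{r-1}$ is empty, $w\setminus\sw(w) = w_{s+1}\cdots w_\ell$, and $\des(w) = \des(\sw(w)) + \chi(w_s>w_{s+1}) + \des(w_{s+1}\cdots w_\ell)$, so one needs $\chi(w_s>w_{s+1})=0$, i.e. $w_{s+1}>w_s$ — and indeed, since $r=1$ the alternative $w_{s+1}<w_{r-1}=w_0=0$ is impossible as all letters are positive, forcing $w_{s+1}>w_s$ by the remark after Definition~\ref{def:special_wave}. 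With this case handled, \eqref{eq:des_spezza} follows in full generality, and I expect the write-up to be short, essentially the display chain above together with the two- or three-line case analysis of the junction indicators.
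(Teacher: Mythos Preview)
Your proposal is correct and follows essentially the same approach as the paper: both split $w$ at positions $r-1$ and $s$, reduce \eqref{eq:des_spezza} to the indicator identity $\chi(w_{r-1}>w_r)+\chi(w_s>w_{s+1})=\chi(w_{r-1}>w_{s+1})$, and verify it via the same case analysis on $(r,s)$ using $w_{r-1}<w_r$ from \eqref{eq:sw_order} and the dichotomy $w_{s+1}>w_s$ or $w_{s+1}<w_{r-1}$ from the remark after Definition~\ref{def:special_wave}. The only cosmetic difference is that you write the two $\des$ expansions separately before comparing, whereas the paper packages them into a single displayed equation.
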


\begin{proof}
First of all, we note that if $r>1$ and $s<\ell$ we have that
\begin{align}
    \des(w)=\des(w_1\cdots w_{\ell})= &\des(w_1\cdots w_{r-1}w_{s+1}\cdots w_{\ell})+\des(w_r\cdots w_s)+\label{eq:des1}\\&-\chi(w_{r-1}>w_{s+1})+\chi(w_{r-1}>w_{r})+\chi(w_{s}>w_{s+1}),\notag
\end{align}
where $\chi(P)$ equals $1$ if the property $P$ is true, and $0$ otherwise.
From the definition of $r$ we have that $w_{r-1}<w_r$, and so that $\chi(w_{r-1}>w_{r})=0$.

If $w_{s+1}>w_{s}$, then $w_{s+1}>w_{s}>w_{r-1}$ and so that $\chi(w_{r-1}>w_{s+1})=\chi(w_{s}>w_{s+1})=0$.
If $w_{s+1}<w_{s}$, then by definition of $s$, we have that $w_{s+1}>w_{r-1}$ and so that $\chi(w_{r-1}>w_{s+1})=\chi(w_{s}>w_{s+1})=1$. In both cases equation~\eqref{eq:des1} simplifies to equation~\eqref{eq:des_spezza}.

If $r=1$ and $s<\ell$ then
\begin{align*}
    \des(w)=\des(w_1\cdots w_{\ell})&=\des(w_{s+1}\cdots w_{\ell})+\des(w_1\cdots w_s)+\chi(w_{s}>w_{s+1})\\
    &=\des(w_{s+1}\cdots w_{\ell})+\des(w_1\cdots w_s)\\
    &=\des(w\setminus\sw(w))+\des(\sw(w)),
\end{align*}
since by definition of $s$ in this case $w_s<w_{s+1}.$

If $r>1$ and $s=\ell$ then
\begin{align*}
    \des(w)=\des(w_1\cdots w_{\ell})&=\des(w_{1}\cdots w_{r-1})+\des(w_r\cdots w_{\ell})+\chi(w_{r}>w_{r-1})\\
    &=\des(w_{1}\cdots w_{r-1})+\des(w_r\cdots w_{\ell})\\
    &=\des(w\setminus\sw(w))+\des(\sw(w)),
\end{align*}
since by definition of $r$ it holds that $w_{r-1}<w_{r}.$
from the definition of $\sw(w)$, we have that $w_{r-1}<w_{r}$ or $r=1$. 

Finally if $r=1$ and $s=\ell$, then $w=\sw(w)$, $w\setminus\sw(w)$ is the empty word, and then equation~\eqref{eq:des_spezza} is easily true.
\end{proof}

\begin{example}
    Let $\sigma=13675428\in \mathcal{S}_8\subset W$. We have that $$\des(\sigma)=\des(13675428)=3=1+2=\des(1328)+\des(6754)=\des(\sigma\setminus\sw(\sigma))+\des(\sw(\sigma)).$$
\end{example}

\begin{remark}\label{rem:unique_wave}
Similarly to what was observed in Remarks~\ref{lec_inverse} and \ref{rem:unique_rwave}, if a wave $\beta$ has length $\ell $, then we have that $1\leq\des(\beta)\leq\ell-1$. Moreover if we have $\ell$ distinct numbers $b_1<b_2<\cdots<b_{\ell}$, for any $0\leq d\leq\ell-1$, we have exactly one way to rearrange them to create a wave, possibly trivial if $d=0$, $\beta_d$ with $\des(\beta_d)$ equal to $d$, that is, ${\beta_d=b_{d+1}b_{d+2}\cdots b_{\ell}b_db_{d-1}\cdots b_1}$.
\end{remark}

Now we can define the desired bijection.

Let $\psi:W\to W$ be the map such that:
\begin{itemize}
    \item If $w=\varepsilon$ is the empty word then $\psi(\varepsilon)=\varepsilon$.
    \item Otherwise
    \begin{equation}
        \psi(w)=\psi(w\setminus\sw(w))*\delta(\sw(w))
    \end{equation}
    where the $*$ operator is the concatenation of words and $\delta(\sw(w))=c_{d+1}c_1c_2\cdots c_dc_{d+2}\cdots c_m$ is a rearrangement of $\sw(w)$, where $c_1<c_2<\cdots<c_m$ are the elements of $\sw(w)$ and $d=\des(\sw(w))$.
\end{itemize}

\begin{theorem}\label{thm:bijection}
    The map $\psi:W\to W$ just defined is a well defined bijection of $W$, such that \begin{equation}
        \des(w)=\lec(\psi(w)).\label{eq:des_lec}
        \end{equation}

    Moreover the restriction to the symmetric group $\psi_{\mathcal{S}_n}:\mathcal{S}_n\to \mathcal{S}_n$ is a bijection.
\end{theorem}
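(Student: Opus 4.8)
The plan is to mimic, almost verbatim, the proof of Theorem~\ref{thm:bijection_2}, systematically replacing reverse waves by waves: the role of $\srw(w)$ is played by $\sw(w)$, the role of the statistic $\srdes(w)$ by $\des(\sw(w))$, Proposition~\ref{prop:rinv} by Proposition~\ref{prop:inv}, the splitting identity of Remark~\ref{rem:des_rspezza} by Proposition~\ref{prop:des_spezza}, and Remark~\ref{rem:unique_rwave} by Remark~\ref{rem:unique_wave}. Every assertion will be proved by induction on the length $\ell$ of $w\in W$, the base case $w=\varepsilon$ being trivial.

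First I would check that $\psi(w)$ is a well-defined rearrangement of $w$: in the inductive step $w\setminus\sw(w)$ has length strictly smaller than $\ell$ since $\sw(w)$ is non-empty, so $\psi(w\setminus\sw(w))$ is a rearrangement of $w\setminus\sw(w)$ by the inductive hypothesis, while $\delta(\sw(w))$ is by construction a rearrangement of $\sw(w)$; hence $\psi(w)=\psi(w\setminus\sw(w))*\delta(\sw(w))$ is a rearrangement of $w$. Next I would introduce the candidate inverse $\phi\colon W\to W$ defined by $\phi(\varepsilon)=\varepsilon$, by $\phi(w)=w$ when $w$ is increasing, and otherwise by $\phi(w)=\ins\bigl(\phi(w\setminus\h(w)),\beta_w\bigr)$, where $\h(w)$ is the rightmost hook of $w$ and $\beta_w$ is the unique wave on the letters of $\h(w)$ with $\des(\beta_w)=\lec(\h(w))$, which is well-defined by Remark~\ref{rem:unique_wave} because $1\le\lec(\h(w))\le\ell(\h(w))-1$ for a non-trivial hook. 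The same induction, together with the fact (Proposition~\ref{prop:inv}) that $\sw(\ins(a,b))=b$, shows that $\phi$ is a well-defined rearrangement of $w$.

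To prove $\psi\circ\phi=\mathrm{id}$, for non-increasing $w$ I would use Proposition~\ref{prop:inv} to get $\sw\bigl(\ins(\phi(w\setminus\h(w)),\beta_w)\bigr)=\beta_w$ and $\ins(\phi(w\setminus\h(w)),\beta_w)\setminus\sw(\cdots)=\phi(w\setminus\h(w))$, so that $\psi(\phi(w))=\psi(\phi(w\setminus\h(w)))*\delta(\beta_w)=(w\setminus\h(w))*\h(w)=w$, using the inductive hypothesis and the fact that $\delta(\beta_w)$ is the unique hook on the letters of $\h(w)$ with $\lec$ equal to $\lec(\h(w))$, i.e. $\h(w)$ itself. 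For $\phi\circ\psi=\mathrm{id}$ the key point — and the step I expect to require the most care — is to check that, since $\des(\sw(w))\ge 1$ whenever $w$ is not increasing (because $\sw(w)$ contains the first descent of $w$), $\delta(\sw(w))$ is a non-trivial hook whose only descent is its first one, and that this first descent is actually the rightmost descent of $\psi(w)=\psi(w\setminus\sw(w))*\delta(\sw(w))$; this yields $\h(\psi(w))=\delta(\sw(w))$ and $\psi(w)\setminus\h(\psi(w))=\psi(w\setminus\sw(w))$, exactly as in the corresponding step of the proof of Theorem~\ref{thm:bijection_2}. Granting this, and observing that $\sw(w)$ is itself a wave, so that the unique wave on its letters with $\des(\sw(w))$ descents is $\sw(w)$, one gets $\phi(\psi(w))=\ins\bigl(\phi(\psi(w\setminus\sw(w))),\sw(w)\bigr)=\ins(w\setminus\sw(w),\sw(w))=w$ by the inductive hypothesis and equation~\eqref{eq:ins3}.

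Finally, with $\delta(\sw(w))=\h(\psi(w))$ in hand, the identity $\des(w)=\lec(\psi(w))$ follows by the same induction: by the recursive evaluation of $\lec$ (Remark~\ref{lec_inverse}) one has $\lec(\psi(w))=\lec(\psi(w\setminus\sw(w)))+\lec(\delta(\sw(w)))=\des(w\setminus\sw(w))+\des(\sw(w))=\des(w)$, using the inductive hypothesis, the construction of $\delta$, and Proposition~\ref{prop:des_spezza}. Since both $\psi$ and $\phi$ permute the letters of every word, they send permutations of $[n]$ to permutations of $[n]$, and being mutually inverse bijections of $W$ their restrictions are mutually inverse bijections of $\mathcal{S}_n$. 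The only genuinely delicate point in the whole argument is, as noted, the identification of $\delta(\sw(w))$ with the rightmost hook of $\psi(w)$; everything else is a routine transcription of the proof of Theorem~\ref{thm:bijection_2}.
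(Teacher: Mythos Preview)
Your proposal is correct and follows essentially the same approach as the paper's own proof: the paper defines the inverse (called $\varphi$ there, with $\wv(\h(w))$ in place of your $\beta_w$), establishes the inverse identities $\delta\circ\wv=\mathrm{id}$ and $\wv\circ\delta=\mathrm{id}$, and proves $\psi\circ\varphi=\mathrm{id}$, $\varphi\circ\psi=\mathrm{id}$, and $\des=\lec\circ\psi$ by the same inductions invoking Propositions~\ref{prop:inv} and~\ref{prop:des_spezza}. The step you flag as delicate---that $\h(\psi(w))=\delta(\sw(w))$---is in fact immediate once you note that $\delta(\sw(w))$ has its unique descent in first position, so it is automatically the suffix of $\psi(w)$ starting at the rightmost descent; the paper simply asserts this ``by definition'' without further comment.
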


\begin{proof}
We will prove by induction on the length $\ell$ of $w\in W$ that $\psi(w)$ is a rearrangement of $w$, and that it is a well defined map. If $\ell=0$ then $w=\varepsilon$ is the empty word and by definition $\psi(\varepsilon)=\varepsilon$, that is fine. If $w$ has length $\ell>0$, then by definition $w\setminus\sw(w)$ is a word of length strictly less than $\ell$, and so by the inductive hypothesis $\psi(w\setminus\sw(w))$ is a rearrangement of $w\setminus\sw(w)$. Moreover $\delta(\sw(w))$ is defined as a rearrangement of $\sw(w)$, so this means that $\psi(w)$ is a rearrangement of $w$, and $\psi$ is a well defined map.

To prove that $\psi$ is a bijection we will define its inverse $\varphi$ as the map such that:
\begin{itemize}
    \item If $w=\varepsilon$ is the empty word then $\varphi(\varepsilon)=\varepsilon$.
    \item Otherwise, if the rightmost hook $\h(w)$ of $w$ is not trivial
    \begin{equation}
        \varphi(w)=\ins(\varphi(w\setminus \h(w)),\wv(\h(w))),
    \end{equation}
    where $w\setminus \h(w)$ is the prefix of $w$ obtained removing $\h(w)$ and $\wv(\h(w))=c_{d+1}c_{d+2}\cdots c_mc_dc_{d-1}\cdots c_1$ is a rearrangement of $\h(w)$, where $c_1<c_2<\cdots<c_m$ are the elements of $\h(w)$ and $d=\lec(\h(w))=\inv(\h(w))$. If $\h(w)$ is a trivial hook, then $\h(w)=w$ and in this case $\varphi(w)=w$
\end{itemize}
First of all let us note that if $\alpha$ is a hook with $\lec(\alpha)=d$, then, thanks to Remark~\ref{rem:unique_wave}, we have that $\wv(\alpha)$ is the unique wave such that its elements are the same of $\alpha$ and such that $\des(\wv(\alpha))=\lec(\alpha)=d$. Similarly, if $\beta$ is a wave with $\des(\beta)=d$, then, thanks to Remark~\ref{lec_inverse}, we have that $\delta(\beta)$ is the unique hook such that its elements are the same of $\beta$ and such that $\lec(\delta(\beta))=\des(\beta)=d$. Putting these two last observation together, we obtain, if $\alpha$ is a hook and $\beta$ is a wave, that
\begin{align}
    \delta(\wv(\alpha))&=\alpha\label{eq:delta_wv}\\
    \wv(\delta(\beta))&=\beta.\label{eq:wv_delta}
\end{align}

We can notice that from the definition of $\psi$ we have that if $w$ is increasing then $\sw(w)=w$ and $\psi(w)=w$, and so it is clear that $\psi$ and $\varphi$ are inverse when restricted to the subset of increasing words, because they are both the identity in this case.

Now let us consider the case where $w\in W$ is not increasing.

Since $w\setminus\h(w)$ has length strictly less than $w$ and, by definition, $\wv(\h(w))$ is a wave, using inductive reasoning, entirely analogous to what was done previously for $\psi$, we obtain that $\varphi(w)$ is a rearrangement of $w$ and that it is a well defined map. 

Let us prove that they are inverse map by induction on the length $\ell$ of $w$. If $\ell=0$ it is clear. If $\ell>0$ then
\begin{align}
\psi(\varphi(w))&=\psi(\ins(\varphi(w\setminus\h(w)),\wv(\h(w))))\notag\\
&=\psi(\varphi(w\setminus\h(w)))*\delta(\wv(\h(w)))\\
&=w\setminus\h(w)*\h(w)=w,\notag
\end{align}
where the second equality is true thanks to Proposition~\ref{prop:inv} and the last equality is true thanks to inductive hypothesis, since $w\setminus\h(w)$ has length strictly less than $\ell$, and thanks to equation~\eqref{eq:delta_wv}. Moreover
\begin{align}
    \varphi(\psi(w))&=\ins(\varphi(\psi(w)\setminus\h(\psi(w))),\wv(\h(\psi(w))))\notag\\
    &=\ins(\varphi(\psi(w)\setminus\delta(\sw(w))),\wv(\delta(\sw(w))))\\
    &=\ins(\varphi(\psi(w\setminus\sw(w))),\sw(w))\notag\\
    &=\ins(w\setminus\sw(w)),\sw(w))=w\notag,
\end{align}
where the second equality is true since the rightmost hook of $\psi(w)$ is by definition $\delta(\sw(\psi(w)))$, the third equality is true thanks to the equation~\eqref{eq:wv_delta} and thanks to the definition of $\psi(w)$. The fourth equality holds by inductive hypothesis, since the length of $w\setminus\sw(w)$ is strictly less than $\ell$ and the last is true thanks to Proposition~\ref{prop:inv}. We have proved that $\psi$ and $\varphi$ are inverse, and so that they are bijections of $W$. Moreover we have already proved that they permute the elements of the words, so that their restrictions to the symmetric group $\mathcal{S}_n$ are two bijections.

We can also show the identity of equation~\eqref{eq:des_lec} by an inductive argument on the length $\ell$ of $w\in W$. If $\ell=0$, then $w=\varepsilon$ is the empty word and $\lec(\psi(\varepsilon))=\lec(\varepsilon)=0=\des(\varepsilon$. If $w$ has length $\ell>0$ we have that
\begin{align}
    \lec(\psi(w))&=\lec(\psi(w\setminus\sw(w)))+\lec(\delta(\sw(w))\notag\\
    &=\des(w\setminus\sw(w))+\des(\sw(w))=\des(w),
\end{align}
where the first equality holds since $\delta(\sw(w))$ is the rightmost hook of $\psi(w)$, the second is true by inductive hypothesis and by definition of $\delta$. The last equality is true thanks to Proposition~\ref{prop:des_spezza}.
\end{proof}

\begin{example}
Let $\sigma=13675428\in \mathcal{S}_8\subset W$, then
\begin{align*}
    \psi(13675428)&=\psi(1328)*\delta(6754)=\psi(18)*\delta(32)*6457\\
    &=\psi(\varepsilon)+\delta(18)*32*6457=\varepsilon*18*32*6457 \\
    &=18326457
\end{align*}
and 
\begin{align*}
    \varphi(18326457)&=\ins(\varphi(1832),\wv(6457))=\ins(\ins(\varphi(18),\wv(32)),6754)\\
    &=\ins(\ins(18,32)),6754)=\ins(1328,6754)=13675428.
\end{align*}
Moreover $\des(13675428)=3=\lec(18326457)=\lec(\psi(13675428))$.
\end{example}

\begin{remark}\label{rem:cat_bij}
    If $w=\gamma*\alpha_1*\cdots*\alpha_k$ is the Hook factorization of a word $w\in W$, then we can write
    \begin{equation*}
        w=\conc(\cdots\conc(\conc(\gamma,\alpha_1),\alpha_2)\cdots     ,\alpha_k),
    \end{equation*}
    where $\conc$ is the binary operation that concatenates words, i.e. $\conc(w',w'')=w'*w''$ for all $w',w''\in W$.

We have that the map $\varphi$ is nothing more than the map obtained by replacing each $\conc$ with $\ins$, and each hook $\alpha_i$ with its corresponding wave $\wv(\alpha_i)$, namely:
\begin{equation*}
    \varphi(w)=\ins(\cdots\ins(\ins(\gamma,\sw(\alpha_1)),\sw(\alpha_2))\cdots     ,\sw(\alpha_k)).
    \end{equation*}

Theorem~\ref{thm:bijection} therefore also implies that for every word in $W$, it can be obtained, in a unique manner, through an ordered and finite sequence of insertions of waves, starting from an increasing word. The map $\psi$ will then be analogously the one that replaces waves $\beta_i$ with the corresponding hooks $\delta(\beta_i)$ and insertion operations with concatenation operations.

In particular, the set of permutations $\sigma\in\mathcal{S}_n$ with $\lec(\sigma)=i$ and whose rightmost hook has length at least $n-k$ is in bijection, via $\varphi$ and $\psi$, with the set of permutations $\sigma\in\mathcal{S}_n$ with $\des(\sigma)=i$ and whose special wave has length at least $n-k$. 
\end{remark}

This result improves Corollary~\ref{corollario:bigezionek}:

\begin{theorem}\label{thm:bigezionek}
Let $i>0$. The following sets have the same cardinality:
\begin{itemize}
    \item the set of the permutations of $k+1$ different numbers in $[n]$ with $i$ descents,
    \item the set of permutations $\sigma\in\mathcal{S}_n$ with $i$ descents and whose special reverse wave has length at least $n-k$,
    \item the set of permutations $\sigma\in\mathcal{S}_n$ with $i$ descents and whose special wave has length at least $n-k$,
    \item the set of permutations $\sigma\in\mathcal{S}_n$ with $\lec(\sigma)=i$ and whose rightmost hook has length at least $n-k$.
\end{itemize}
\end{theorem}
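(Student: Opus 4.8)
The plan is to assemble Theorem~\ref{thm:bigezionek} from the bijections already constructed in the previous sections, so essentially no new mathematical content is required: the work is bookkeeping of which maps compose to which, and verifying that each preserves the relevant statistic and the ``length at least $n-k$'' constraint.

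\textbf{Step 1: the first and fourth sets.} The bijection between the set of permutations of $k+1$ different numbers in $[n]$ with $i$ descents (that is, $\mathcal{S}_{n,k+1}$ with $\des$ equal to $i$) and the set of $\sigma\in\mathcal{S}_n$ with $\lec(\sigma)=i$ whose rightmost hook has length at least $n-k$ is precisely the content of the composition $\theta\circ\mu$ analysed at the end of Section~\ref{sec:generalexplicit}. There $\mu$ (Theorem~\ref{thm:bij_togli_par}) sends $\mathcal{S}_{n,k+1}$ bijectively and $\des$-preservingly onto $\{\sigma\in\mathcal{S}_n\mid \ell(\srw(\sigma))\geq n-k\}$, and $\theta$ (Theorem~\ref{thm:bijection_2}, together with the refinement in Remark~\ref{rem:cat_bij_2}) restricts to a bijection from that set onto $\{\sigma\in\mathcal{S}_n\mid \ell(\h(\sigma))\geq n-k\}$ sending $\des$ to $\lec$. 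So the first and fourth sets have the same cardinality, and simultaneously the first and the second sets do, since $\mu$ alone identifies the first set with the second.

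\textbf{Step 2: the third and fourth sets.} By Theorem~\ref{thm:bijection} and especially the last sentence of Remark~\ref{rem:cat_bij}, the maps $\varphi$ and $\psi$ give mutually inverse bijections between $\{\sigma\in\mathcal{S}_n\mid \des(\sigma)=i,\ \ell(\sw(\sigma))\geq n-k\}$ and $\{\sigma\in\mathcal{S}_n\mid \lec(\sigma)=i,\ \ell(\h(\sigma))\geq n-k\}$; this is exactly the equality of cardinalities of the third and fourth sets.

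\textbf{Step 3: conclusion.} Chaining Steps~1 and~2 through the common fourth set (the $\lec=i$, rightmost hook of length $\geq n-k$ set) shows all four sets are equinumerous; one may even write down the explicit bijections $\mu$, $\theta^{-1}\circ\varphi^{-1}=\eta^{-1}\circ\psi^{-1}$ relating any two of them. I do not anticipate a genuine obstacle: the only point requiring a little care is to confirm that the ``length $\geq n-k$'' conditions transported by $\theta$ (resp.\ $\psi$) are exactly the stated ones, i.e.\ that $\theta$ sends $\srw$ to $\h$ lengthwise and $\psi$ sends $\sw$ to $\h$ lengthwise — but this is recorded in Remarks~\ref{rem:cat_bij_2} and~\ref{rem:cat_bij} respectively, so the proof is just a matter of citing those statements in the right order.

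\begin{proof}
Combine the bijections established above. By Theorem~\ref{thm:bij_togli_par} the map $\mu$ is a $\des$-preserving bijection from $\mathcal{S}_{n,k+1}$ onto $\{\sigma\in\mathcal{S}_n\mid \ell(\srw(\sigma))\geq n-k\}$; restricting to permutations with $i$ descents shows that the first and second sets in the list have the same cardinality. By Theorem~\ref{thm:bijection_2} together with Remark~\ref{rem:cat_bij_2}, the map $\theta$ restricts to a bijection from $\{\sigma\in\mathcal{S}_n\mid \des(\sigma)=i,\ \ell(\srw(\sigma))\geq n-k\}$ onto $\{\sigma\in\mathcal{S}_n\mid \lec(\sigma)=i,\ \ell(\h(\sigma))\geq n-k\}$, so the second and fourth sets have the same cardinality. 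Finally, by Theorem~\ref{thm:bijection} together with Remark~\ref{rem:cat_bij}, the map $\psi$ restricts to a bijection from $\{\sigma\in\mathcal{S}_n\mid \des(\sigma)=i,\ \ell(\sw(\sigma))\geq n-k\}$ onto $\{\sigma\in\mathcal{S}_n\mid \lec(\sigma)=i,\ \ell(\h(\sigma))\geq n-k\}$, so the third and fourth sets have the same cardinality. Hence all four sets are equinumerous.
\end{proof}
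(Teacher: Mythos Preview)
Your proposal is correct and follows exactly the approach the paper itself takes: the theorem is stated as a summary after all the bijections $\mu$, $\theta$, $\psi$ have been built, and the paper does not write out a separate proof, implicitly relying on Theorem~\ref{thm:bij_togli_par}, Theorem~\ref{thm:bijection_2} with Remark~\ref{rem:cat_bij_2}, and Theorem~\ref{thm:bijection} with Remark~\ref{rem:cat_bij}. Your assembly of these ingredients (first $=$ second via $\mu$, second $=$ fourth via $\theta$, third $=$ fourth via $\psi$) is precisely the intended argument.
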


\bibliographystyle{amsalpha}
\bibliography{Biblebib}
\end{document}